\documentclass[11pt]{amsart}

\usepackage[T1]{fontenc}
\usepackage[utf8]{inputenc}
\usepackage{textcomp}
\usepackage{lmodern}
\usepackage{microtype}
\usepackage{xcolor}

\usepackage{amssymb}
\usepackage{amsthm}
\usepackage{amscd}
\usepackage{mathabx}

\usepackage{hyperref}

\usepackage{mathscinet}
\usepackage[giveninits=true,doi=false,url=false,sortcites,backend=biber,backref=true]{biblatex}
\addbibresource{../../Bibliographies/main.bib}

\usepackage{bussproofs}

\newtheorem{theorem}{Theorem}[section]

\newtheorem{lemma}[theorem]{Lemma}

\theoremstyle{definition}
\newtheorem{definition}[theorem]{Definition}

\mathchardef\mhyphen="2D

\allowdisplaybreaks[2]

\begin{document}

\title{Polymorphic Ordinal Notations\\{{Work in Progress}}}
\author{Henry Towsner}
\date{\today}
\thanks{Partially supported by NSF grant DMS-2054379}
\address {Department of Mathematics, University of Pennsylvania, 209 South 33rd Street, Philadelphia, PA 19104-6395, USA}
\email{htowsner@math.upenn.edu}
\urladdr{\url{http://www.math.upenn.edu/~htowsner}}

\maketitle

\section{Introduction}

Our goal in this paper is to introduce a new ordinal notation for theories approaching the strength of $\Pi^1_2\mhyphen\mathrm{CA}_0$.\footnote{The current draft represents an in-progress version of this work, missing some exposition and proofs.} Various notations around this strength, and stronger, have been developed by Rathjen and Arai \cite{RathjenStability,RathjenParameterFree,MR1791381,MR3443524,arai2024ordinal}, however we will take a slightly different tactic here.

Our approach is based on the methods of cut-elimination developed by the author in \cite{2403.17922,towsner:MR2499713}. In this approach, one uses the sequent calculus not only to represent proofs, but also to represent functions on proofs. The ordinal terms that result from this naturally include using variables to represent ``ordinal'' bounds on ill-founded deductions.

Here we develop the corresponding ordinal notations (without much reference to the cut-elimination techniques that motivate them). We first present a (more or less) standard presentations of the so-called Buchholz ordinal (the proof-theoretic ordinal of $\Pi^1_1\mhyphen\mathrm{CA}_0$) to recall some typical patterns that appear in such ordinal notations---in particular, a presentation of collapsing functions and the key lemmata they need to satisfy.

The notation we present, as is typical, uses ordinal notations $\Omega_n$, often thought of as representing uncountable cardinals. More formally, our ordinal terms have a distinguished class of countable ordinal terms, and $\Omega_1$ is understood as being larger than any notation representing a countable ordinal term\footnote{Even more formally, our ordinal notation is equipped, at least implicitly, with a system of fundamental sequences. An ordinal of ``countable formal cofinality'' is one in which the associated fundamental sequence of smaller ordinals is indexed by $\mathbb{N}$. $\Omega_1$, on the other hand, is the smallest ordinal whose fundamental sequence is not indexed by $\mathbb{N}$.}, $\Omega_2$ is larger than any ordinal term of ``cardinality equal to $\Omega_2$'', and so on. We then have collapsing functions $\vartheta_n$ which map ordinal terms to terms of smaller cardinality---we take $\vartheta_n\alpha<\Omega_n$ for all $\alpha$, even when $\alpha$ has large cardinality.

As a stepping stone, we give a second presentation of the same ordinal using an approach we call ``polymorphic''---we have a single notation $\Omega$ which takes on the role of different $\Omega_n$ depending on its context in the proof.\footnote{As some limited motivation for this, recall that in the ordinal analysis of $\Pi^1_1\mhyphen\mathrm{CA}_0$ we generally have to ``stratify'' the theory syntactically \cite{MR635488,akiyoshi2017ordinal}, introducing some formalism for keeping track of the way that applications of $\Pi^1_1$-comprehension can be nested, since we need to bound instances of parameter-free $\Pi^1_1$-comprehension by terms involving $\Omega_1$, those involving only parameters which are themselves instantiated with parameter-free $\Pi^1_1$-comprehension by terms involving $\Omega_2$, and so on. The ordinal terms here do not need this stratification---one can bound an instance of $\Pi^1_1$ comprehension with a term involving ``polymorphic $\Omega$'' and then determine the cardinality that $\Omega$ should be interpreted at later.}

Finally, we present new ordinal notations, suitable for theories a bit below parameter-free $\Pi^1_1\mhyphen\mathrm{CA}_0$, which make essential use of this polymorphism.

\section{Background: Buchholz's Ordinal}

We first present an essentially standard notation for Buchholz's ordinal. We include this since there have been many variations on ordinal notations around this strength (see \cite{MR3525540}, for instance), so having a presentation of a familiar system will make it easier to describe the modifications needed in new ordinal notations. Our collapsing function $\vartheta$ is based on the version introduced by Rathjen and Weiermann \cite{rathjen:MR1212407}, motivated in part by the approach to this function in \cite{MR4594953,MR3994445}.

\begin{definition}
  We define the ordinal terms $\mathrm{OT}_{\Omega_\omega}$ together with a distinguished subset $\mathbb{H}$ by:
  \begin{itemize}
  \item if $\{\alpha_0,\ldots,\alpha_{n-1}\}$ is a finite multi-set of elements of $\mathrm{OT}_{\Omega_\omega}$ in $\mathbb{H}$ and $n\neq 1$ then $\#\{\alpha_0,\ldots,\alpha_{n-1}\}$ is in $\mathrm{OT}_{\Omega_\omega}$,
  \item if $\alpha$ is in $\mathrm{OT}_{\Omega_\omega}$ then $\omega^\alpha$ is in $\mathbb{H}$,
  \item for each natural number $n>1$, $\Omega_n$ is in $\mathbb{H}$,
  \item for each $n>1$ and any $\alpha$ in $\mathrm{OT}_{\Omega_\omega}$, $\vartheta_n\alpha$ is an ordinal term in $\mathbb{H}$.
  \end{itemize}

  We define $\mathbb{SC}$ to be all ordinal terms in $\mathbb{H}$ \emph{not} of the form $\omega^\alpha$---that is, $\mathbb{SC}$ consists of ordinal terms of the form $\Omega_n$ or $\vartheta_n\alpha$.
\end{definition}

For reasons we explain later, we choose to use the commutative sum $\#$ as our basic operation instead of the usual $+$, but this choice is not essential. One (trivial) benefit is that we do not need a special case for $0$---$0$ is abbreviates the empty sum $\#\emptyset$.

We adopt the convention that when $\alpha,\beta\in \mathbb{H}$ then $\alpha\#\beta$ means $\#\{\alpha,\beta\}$. We extend this to terms not in $\mathbb{H}$ by collapsing nested applications of $\#$: that is, $\alpha\#\{\beta_0,\ldots,\beta_{n-1}\}$ means $\{\alpha,\beta_0,\ldots,\beta_{n-1}\}$ and so on.

Before defining the ordering, we need to define the critical subterms.
\begin{definition}
  For $n\in\mathbb{N}$, we define $K_n\alpha$ inductively by:
  \begin{itemize}
  \item $K_n\{\alpha_i\}=\bigcup_i K_n\alpha_i$,
  \item $K_n\omega^\alpha=K_n\alpha$,
  \item $K_n\Omega_m=\left\{\begin{array}{ll}
                              \{\Omega_m\}&\text{if }m<n\\
                              \emptyset&\text{if }n\leq m
                                         \end{array}\right.$,
  \item $K_n\vartheta_m\alpha=\left\{\begin{array}{ll}
        K_n\alpha&\text{if }n<m\\
        \{\vartheta_m\alpha\}&\text{if }m\leq n
        \end{array}\right.$
  \end{itemize}
\end{definition}

\begin{definition}
  We define the ordering $\alpha<\beta$ by:
  \begin{itemize}
\item $\#\{\alpha_i\}<\#\{\beta_j\}$ if there is some $\beta_{j_0}\in\{\beta_j\}\setminus\{\alpha_i\}$ such that, for all $\alpha_i\in\{\alpha_i\}\setminus\{\beta_j\}$, $\alpha_i<\beta_{j_0}$,
\item if $\beta\in\mathbb{H}$ then:
  \begin{itemize}
  \item $\beta<\#\{\alpha_i\}$ if there is some $i$ with $\beta\leq\alpha_i$,
  \item $\#\{\alpha_i\}<\beta$ if, for all $i$, $\alpha_i<\beta$,
  \end{itemize}
\item $\omega^\alpha<\omega^\beta$ if $\alpha<\beta$,
\item if $\beta\in\mathbb{SC}$ then:
  \begin{itemize}
  \item $\beta<\omega^\alpha$ if $\beta<\alpha$,
  \item $\omega^\alpha<\beta$ if $\alpha\leq\beta$,
  \end{itemize}
\item $\Omega_m<\Omega_n$ if $m<n$,
\item $\Omega_m<\vartheta_n\alpha$ if there is a $\beta\in K_n\alpha$ with $\Omega_m\leq\beta$,
\item $\vartheta_m\alpha<\Omega_n$ if for all $\beta\in K_m\alpha$, $\beta<\Omega_n$,
\item $\vartheta_m\alpha<\vartheta_n\beta$ if:
  \begin{itemize}
  \item there is some $\gamma\in K_n\beta$ so that $\vartheta_n\alpha\leq\gamma$,
  \item for all $\gamma\in K_m\alpha$, $\gamma<\vartheta_n\beta$, and either:
    \begin{itemize}
    \item $m<n$, or
    \item $m=n$ and $\alpha<\beta$.
    \end{itemize}
  \end{itemize}
\end{itemize}
\end{definition}

We generally expect $\vartheta_3\alpha$ to have cardinality $\Omega_2$. When defining an expression like $\vartheta_3\Omega_1$, where $\alpha$ has cardinality less than $\Omega_2$, we have to decide whether this guideline applies even when $\alpha$ itself is small, or if $\vartheta_3\Omega_1$ should instead have cardinality comparable to $\Omega_1$. We take the latter path, even though it is less conventional and a bit less elegant here, since it better illustrates some behavior we wish to emphasize. (It would be only a minor change to the definition of the ordering to switch convention, of course.)

It is customary to analogize $\Omega_1$ to an uncountable ordinal; then ordinals like $\vartheta_1\Omega_1$ are ``countable'', and those like $\Omega_3\#\Omega_2\#\omega^{\Omega_1}$ has ``cardinality $\aleph_3$'' and so on.

More formally, we may assign to each ordinal term a \emph{formal cardinality}, which we may take to be a natural number in $\mathbb{N}$---the formal cardinality $0$ is the countable ordinals, $1$ is those at the cardinality of $\Omega_1$, and so on. It is convenient to first define the set of formal cardinalities appearing in $\alpha$.

\begin{definition}
  We define $\mathbb{FC}(\alpha)$ by:
  \begin{itemize}
  \item $\mathbb{FC}(\#\{\alpha_i\})=\bigcup_i\mathbb{FC}(\alpha_i)$,
  \item $\mathbb{FC}(\omega^\alpha)=\mathbb{FC}(\alpha)$,
  \item $\mathbb{FC}(\Omega_n)=\{n\}$,
  \item $\mathbb{FC}(\vartheta_n\alpha)=\mathbb{FC}(\alpha)\setminus\{m\mid m\geq n\}$.
  \end{itemize}
  
  We then define $\overline{\mathbb{FC}}(\alpha)=\max\mathbb{FC}(\alpha)$ (where the maximum of the empty set is $0$.)
\end{definition}

It is easy to check that $\overline{\mathbb{FC}}(\alpha)<\overline{\mathbb{FC}}(\beta)$ implies $\alpha<\beta$.

There is a second analogy, less often discussed, which is that we could think of $\Omega_n$ as being something akin to a free variable, with $\vartheta_n$ acting as the corresponding quantifier. The ordinals of formal cardinality $0$ are precisely the ``closed'' ordinals. %

It will be useful to consider ordinal terms with conventional free variables\footnote{This is why we prefer to use $\#$: substitution of variables, at least by ordinals in $\mathbb{H}$, does not alter the normal of an ordinal when we use $\#$.}: we add, for each $n>0$, an additional ordinal term $v_n$, understood to have formal cardinality $n$, together with the rule that $\vartheta_n\alpha$ cannot contain any $v_m$ with $m\geq n$. We extend the ordering by specifying that $v_n$ is in $\mathbb{SC}$, is smaller than $\Omega_n$, but is incomparable to other terms; we also set $K_nv_m=\left\{\begin{array}{ll}
                              \{v_m\}&\text{if }m<n\\
                              \emptyset&\text{if }n\leq m
                                         \end{array}\right.$, just like $\Omega_m$. (Only the first case actually gets applied, because of the restriction on variables appearing in $\vartheta_n\alpha$.)

For cut-elimination proofs of the sort in \cite{2403.17922}, the key properties of these ordinal terms are given by the following lemma.

\begin{lemma}[Key Lemma, weak version]
  \begin{enumerate}
  \item If $\alpha<\beta$ and $\gamma$ has formal cardinality $<n$ then $\alpha[v_n\mapsto\gamma]<\beta[v_n\mapsto\gamma]$.
  \item If $\alpha<\beta$ are ordinal terms containing no free variables $v_m$ with $m> n$ and $K_n\alpha<\vartheta_n\beta$ then $\vartheta_n\alpha<\vartheta_n\beta$.
  \item If $\alpha<\beta$ are ordinal terms containing no free variables $v_m$ with $m\geq n$, $\gamma<\beta$ is an ordinal term containing no free variables $v_m$ with $m>n$, $K_n\alpha<\vartheta_n\beta$, and $K_n\gamma<\vartheta_n\beta$, then $\vartheta_n(\gamma[v\mapsto\vartheta_n\alpha])<\vartheta_n\beta$.
\end{enumerate}
\end{lemma}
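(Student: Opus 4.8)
The plan is to prove the three parts in order; part~(1) carries essentially all of the combinatorial content, and parts~(2) and~(3) are short formal consequences of it.

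\emph{Part~(1).} I would argue by induction on $|\alpha|+|\beta|$ (number of symbols), proving simultaneously the stated strict inequality and its weak variant; the case $\alpha=\beta$ is trivial since substitution is a function on terms. The technical core is a bookkeeping fact about how $K_m$ interacts with the substitution $\sigma=[v_n\mapsto\gamma]$, which I would isolate and prove first by a direct computation from the definition of $K_m$: for every $m$, $K_m(\delta\sigma)$ is obtained from $K_m\delta$ by applying $\sigma$ to each of its elements (this does nothing to elements of the form $\Omega_k$ or $v_k$ with $k\ne n$, and nothing to elements $\vartheta_k\rho$ with $k\le n$, which cannot contain $v_n$), together with the set $K_m\gamma$ in the case that $v_n$ occurs in $\delta$ in a position from which $K_m$ ``sees through'' every enclosing $\vartheta$. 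The hypothesis $\overline{\mathbb{FC}}(\gamma)<n$ is exactly what makes this usable downstream: every term in $K_m\gamma$ has formal cardinality $<n$, and $\gamma$ contains no variable $v_{m'}$ and no $\vartheta$-unshielded $\Omega_{m'}$ with $m'\ge n$, so the substitution neither creates nor destroys a critical subterm ``at a level $\ge n$''. Granting the bookkeeping fact, one then runs through the clauses of the definition of $<$: the $\#$-clauses and the clause for $\omega^\alpha<\omega^\beta$ are immediate from the inductive hypothesis (and, since nested $\#$'s collapse, $\delta\sigma$ stays in normal form, so the clauses apply verbatim); the clauses comparing members of $\mathbb{SC}$---in particular $\vartheta_m$ against $\vartheta_{m'}$ and $\vartheta_m$ against $\Omega_{m'}$---reduce, via the bookkeeping fact, to comparisons among strictly smaller critical subterms and among $\alpha,\beta$ themselves, where the inductive hypothesis applies, the genuinely new elements contributed by $K_m\gamma$ being dispatched using ``$\overline{\mathbb{FC}}$ smaller implies term smaller''; and the clauses that actually mention $v_n$ survive because $v_n$ lies below $\Omega_n$ and is incomparable to everything else, while $\gamma$, being of formal cardinality $<n$, likewise lies below $\Omega_n$.

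\emph{Parts~(2) and~(3).} Part~(2) is then a direct instance of the clause defining $\vartheta_n\alpha<\vartheta_n\beta$: since $\alpha,\beta$ have no free $v_m$ with $m>n$ the sets $K_n\alpha,K_n\beta$ are unproblematic, we are in the case $m=n$, the hypothesis $K_n\alpha<\vartheta_n\beta$ is precisely the requirement that each element of $K_n\alpha$ lie below $\vartheta_n\beta$, and $\alpha<\beta$ supplies the rest (reading that clause disjunctively, as is surely intended---a literal conjunctive reading would make the lemma false---we have produced its main disjunct). For part~(3), first $\overline{\mathbb{FC}}(\vartheta_n\alpha)<n$ by the definition of $\mathbb{FC}$, so part~(1) applies to the substitution $[v\mapsto\vartheta_n\alpha]$ with $v=v_n$; since $\beta$ contains no $v_m$ with $m\ge n$, we have $\beta[v\mapsto\vartheta_n\alpha]=\beta$, and part~(1) applied to $\gamma<\beta$ gives $\gamma[v\mapsto\vartheta_n\alpha]<\beta$. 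By the bookkeeping fact, $K_n(\gamma[v\mapsto\vartheta_n\alpha])$ is contained in $K_n\gamma$ together with $K_n(\vartheta_n\alpha)=\{\vartheta_n\alpha\}$; the former is below $\vartheta_n\beta$ by hypothesis, and $\vartheta_n\alpha<\vartheta_n\beta$ by part~(2) (from $\alpha<\beta$ and $K_n\alpha<\vartheta_n\beta$), so $K_n(\gamma[v\mapsto\vartheta_n\alpha])<\vartheta_n\beta$. A second application of part~(2), with $\gamma[v\mapsto\vartheta_n\alpha]$ in the role of $\alpha$ and $\beta$ unchanged, gives $\vartheta_n(\gamma[v\mapsto\vartheta_n\alpha])<\vartheta_n\beta$.

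\emph{Main obstacle.} The whole burden is in part~(1), and within it the bookkeeping fact for $K_m$ under substitution: the delicate point is ruling out that the substitution silently changes which clause of the ordering governs some comparison---this is most at risk around subterms $\omega^\delta$ or $\vartheta_k\delta$ in which $v_n$ occurs---and verifying in each such case that the bound $\overline{\mathbb{FC}}(\gamma)<n$ is precisely strong enough. Once part~(1) is in hand, parts~(2) and~(3) are purely formal.
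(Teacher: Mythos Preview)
Your proposal is correct. Note that the paper does not actually prove the weak version: it states it, remarks that a relativized form is needed, and then proves only the full Key Lemma with the $D_{n,\delta}$ and $\ll^n_\delta$ machinery. So there is no line-by-line comparison to make; what can be compared is style.

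For part~(1) the paper's argument (given for the identical statement in the full lemma) is the same induction you outline, but much terser: it singles out the case $\alpha=\vartheta_m\alpha'$, $\beta=\vartheta_{m'}\beta'$ and disposes of it by observing that if $m'\le n$ then neither side contains $v_n$ so substitution is the identity, and if $m'>n$ the inductive hypothesis applies directly. Your explicit isolation of the ``bookkeeping fact'' for $K_m$ under substitution is not in the paper, but it is the right thing to make precise and is exactly what the paper is using implicitly.

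For parts~(2) and~(3) your organization is slightly cleaner than the paper's analogue: you reduce~(3) to two invocations of~(2) via the containment $K_n(\gamma[v_n\mapsto\vartheta_n\alpha])\subseteq K_n\gamma\cup\{\vartheta_n\alpha\}$, whereas the paper, in the relativized setting, unfolds the definition of $D_{n,\delta}$ and verifies the $K$-bounds by hand each time. Both routes are short; yours makes the dependence of~(3) on~(2) transparent, while the paper's direct computation is what is actually needed once the $\delta$-parameter is present.
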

In fact, we need slightly more than this---we need a ``relativized'' version in which the condition $K_n\alpha<\beta$ is replaced by $K_n\alpha<\max\{\beta,\delta\}$ for some $\delta$, and corresponding changes are made elsewhere.

\begin{definition}
  When $\overline{\mathbb{FC}}(\gamma)<n$, we define $D_{n,\gamma}\beta=\vartheta_n(\omega^{\Omega_n\#\beta}\#\gamma)$.  More generally, for $m\in(0,n)$, we define $D_{m,\gamma}\beta=D_{m,0}D_{m+1,\gamma}\beta$.
  
  When $\overline{\mathbb{FC}}(\gamma)<n$, we define $\alpha\ll^n_\gamma\beta$ to hold if $\alpha<\beta$ and, for all $m\leq n$, every element of $K_m\alpha$ is bounded by $D_m\beta$.
\end{definition}
Note that this implies that $\ll^0_\gamma$ is just $<$.

\begin{lemma}[Key Lemma]
  \begin{enumerate}
  \item If $\alpha<\beta$ and $\delta$ has formal cardinality $<n$ then $\alpha[v_n\mapsto\gamma]<\beta[v_n\mapsto\gamma]$.
  \item If $\overline{\mathbb{FC}}(\delta)<n$, $\alpha\ll^n_\delta\beta$ are ordinal terms containing no free variables $v_m$ with $m\geq n$ then $D_{n,\delta}(\alpha)\ll^{n-1}_0D_{n,\delta}(\beta)$.
  \item If $\overline{\mathbb{FC}}(\delta)<n$, $\alpha\ll^n_\delta\beta$ are ordinal terms containing no free variables $v_m$ with $m\geq n$, and $\gamma\ll^n_\delta\beta$ is an oridnal term containing no free variables $v_m$ with $m>n$, then $D_{n,D_{n,\delta}(\alpha)}(\gamma[v_n\mapsto D_{n,\delta}(\alpha)])\ll^{n-1}_{\delta}D_{n,\gamma}(\beta)$.
  \end{enumerate}
\end{lemma}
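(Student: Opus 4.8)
The plan is to derive all three clauses from the weak Key Lemma (in the relativized form indicated just after its statement), by unwinding the definitions of $D_{n,\gamma}$ and $\ll^n_\gamma$ and peeling off one collapsing level at a time. Clause (1) is nothing but part (1) of the weak Key Lemma; it is proved by the usual simultaneous induction on the build-up of the terms and on the inductive definition of $<$, the only points to check being that, since $\vartheta_m\sigma$ never contains $v_n$ for $m\le n$, the substitution $v_n\mapsto\gamma$ commutes with $K_m$ for every $m$, and that $\overline{\mathbb{FC}}(\gamma)<n$ keeps intact the restriction on which variables may occur inside $\vartheta_m$ for $m>n$; then every clause of the definition of $<$ is verified directly.

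For clauses (2) and (3) I would first isolate two elementary observations. The first is a computation of critical subterms: for $m<n$ one has $K_m\Omega_n=\emptyset$ and $K_m(\vartheta_n\rho)=K_m\rho$, so $K_m(D_{n,\delta}\sigma)=K_m\sigma\cup K_m\delta$; and unwinding the telescoping $D_{m,\delta}=D_{m,0}\circ\cdots\circ D_{n-1,0}\circ D_{n,\delta}$ shows that the argument of the outermost $\vartheta_m$ in $D_{m,\delta}\sigma$ has $K_m$-set containing $K_m\sigma\cup K_m\delta$. The second is absorption: for any term $\tau$ and any $\eta\in K_m\tau$ one has $\eta<\vartheta_m\tau$, immediately from the clauses of the ordering comparing $\eta$ to a member of $K_m$ of the right-hand argument (take that member to be $\eta$ itself). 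Together these say: the $K_m$-set of any summand sitting inside the argument of a $\vartheta_m$-term lies below that term.

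Given these, clause (2) is almost immediate. The inequality $D_{n,\delta}(\alpha)<D_{n,\delta}(\beta)$ is part (2) of the weak Key Lemma applied to $\omega^{\Omega_n\#\alpha}\#\delta<\omega^{\Omega_n\#\beta}\#\delta$ (a consequence of $\alpha<\beta$), whose side condition $K_n(\omega^{\Omega_n\#\alpha}\#\delta)<D_{n,\delta}(\beta)$ breaks into $K_n\alpha<D_{n,\delta}(\beta)$ (the $m=n$ clause of $\alpha\ll^n_\delta\beta$) and $K_n\delta<D_{n,\delta}(\beta)$ (absorption, as $\delta$ is a summand of the argument). For the $\ll^{n-1}_0$ part, the level-$m$ bound in $\ll^{n-1}_0 D_{n,\delta}(\beta)$ is exactly $D_{m,0}\circ\cdots\circ D_{n-1,0}$ applied to $D_{n,\delta}(\beta)$, i.e.\ precisely $D_{m,\delta}(\beta)$; since $K_m(D_{n,\delta}\alpha)=K_m\alpha\cup K_m\delta$, the first piece is bounded by the $m$-clause of $\alpha\ll^n_\delta\beta$ and the second by absorption. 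Clause (3) follows the same pattern: writing $a=D_{n,\delta}(\alpha)$, its left side is $\vartheta_n(\omega^{\Omega_n\#\gamma[v_n\mapsto a]}\#a)$, which is part (3) of the weak Key Lemma applied with $\alpha'=\omega^{\Omega_n\#\alpha}\#\delta$ (so $\vartheta_n\alpha'=a$), $\beta'=\omega^{\Omega_n\#\beta}\#\gamma$, and $\gamma'=\omega^{\Omega_n\#\gamma}\#v_n$; here $\alpha'<\beta'$ and $\gamma'<\beta'$ hold because $\omega^{\Omega_n\#\beta}$ dominates $\omega^{\Omega_n\#\alpha},\omega^{\Omega_n\#\gamma},\delta,v_n$ together with $\gamma<\beta$, the side condition on $\gamma'$ is absorption into $D_{n,\gamma}(\beta)$, and the side condition on $\alpha'$ comes from the $m=n$ clause of $\alpha\ll^n_\delta\beta$ plus absorption of $\delta$. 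The $\ll^{n-1}_\delta$ part reduces, for each $m\le n-1$, to bounding $K_m\gamma\cup K_m\alpha\cup K_m\delta$ by $D_{m,\delta}(D_{n,\gamma}\beta)$: since $v_n$ occurs in $\gamma$ only underneath some $\vartheta_k$ with $k>n>m$, the operation $K_m$ descends past every such occurrence, so $K_m(\gamma[v_n\mapsto a])=K_m\gamma\cup K_m a$; then $K_m\gamma$ and $K_m\delta$ are absorbed (each appears as a summand inside $D_{m,\delta}(D_{n,\gamma}\beta)$) and $K_m\alpha$ is controlled by $\alpha\ll^n_\delta\beta$.

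The genuine obstacle, and the reason the \emph{relativized} weak Key Lemma must be invoked in place of the bare version, is that throughout clause (3) one has to compare the target $D_{n,\gamma}(\beta)$ — carrying the ``new'' parameter $\gamma$ — against the bounds $D_{n,\delta}(\beta)$ that the hypothesis $\alpha\ll^n_\delta\beta$ delivers, which carry the ``old'' parameter $\delta$; and $\delta$ being of formal cardinality $<n$ by itself gives no relation between $D_{n,\delta}(\beta)$ and $D_{n,\gamma}(\beta)$. This is exactly what the relativization buys: the side condition ``$K_n\alpha<\vartheta_n\beta'$'' is weakened to ``$K_n\alpha<\max\{\vartheta_n\beta',D_{n,\delta}(\beta)\}$'' (with the parallel changes in the conclusion and in parts (2)–(3)), and the $\max$ then absorbs precisely the discrepancy between the two parameters, so the reductions above go through. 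Everything past that point is bookkeeping — tracking which subterm sits inside which argument of which $\vartheta_m$, and at which level a given critical set is absorbed — since the ordering itself is touched only through the three packaged statements of the weak Key Lemma.
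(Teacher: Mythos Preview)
Your treatment of clauses (1) and (2) is essentially the paper's proof in different clothing. What you call ``applying part (2) of the weak Key Lemma'' is literally one clause of the definition of $<$ for comparing two $\vartheta_n$-terms; the paper just unpacks that clause directly, computing $K_n(\omega^{\Omega_n\#\alpha}\#\delta)=K_n\alpha\cup K_n\delta$ and checking the bounds, exactly as you do. So there is no difference in substance for those parts.

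Clause (3) is where you and the paper diverge, and the divergence stems from a typo in the statement rather than a genuine obstacle. The conclusion should read $\ll^{n-1}_0 D_{n,\delta}(\beta)$, not $\ll^{n-1}_\delta D_{n,\gamma}(\beta)$: the paper's own proof of (3) concludes with $D_{n,D_{n,\delta}(\alpha)}(\gamma')<D_{n,\delta}(\beta)$ and bounds the lower $K_m$-sets by $D_{m,\delta}(\beta)$, and the polymorphic Key Lemma in \S\ref{sec:poly_variables} has the analogous clause ending in $D_\delta(\beta)$. With the intended target $D_{n,\delta}(\beta)$ there is no mismatch between the ``new'' parameter $\gamma$ and the ``old'' parameter $\delta$, and the argument is exactly as direct as clause (2): compute $K_n(\omega^{\Omega_n\#\gamma'}\#D_{n,\delta}(\alpha))\subseteq K_n\gamma\cup K_n\alpha\cup K_n\delta\cup\{D_{n,\delta}(\alpha)\}$ and bound each piece by $D_{n,\delta}(\beta)$ using the hypotheses.

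Your proposal to resolve the mismatch by invoking a ``relativized weak Key Lemma'' is circular. The sentence immediately preceding the Key Lemma says that the relativized version of the weak Key Lemma is precisely what is about to be stated and proved --- that is the Key Lemma itself. You never write down the relativized statement you want, and any precise formulation strong enough to do the work would be equivalent to what you are trying to establish. The paper's proof is entirely self-contained and does not appeal to the weak Key Lemma at all.
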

\begin{proof}
  \begin{enumerate}
  \item   By a straightforward induction on $\alpha$ and $\beta$. The most difficult case is when $\alpha=\vartheta_m\alpha'$ and $\beta=\vartheta_{m'}\beta'$, if $m'\leq n$ then $v_n$ does not appear $\beta'$, so $\beta$ is unchanged after substitution. Also we cannot have $v_n$ in $\alpha'$ (if we did, we would have $m>n$ and therefore $v_n\in K_n\alpha'$, so $\vartheta_{m'}\beta'<\vartheta_m\alpha'$), so $\vartheta_m\alpha'$ is also unchanged by substitution, so the comparison remains.

    If $m'>n$ then the claim follows immediately from the inductive hypothesis.
  \item First, observe that $\omega^{\Omega_n\#\alpha}\#\delta<\omega^{\Omega_n\#\beta}\#\delta$. Also, $K_n(\omega^{\Omega_n\#\alpha}\#\delta)=K_n\alpha\cup K_n\beta$, so any element of $K_n(\omega^{\Omega_n\#\alpha}\#\delta)$ is bounded by an element f either $K_n\beta$ or $K_n\delta$, and is therefore $<D_{n,\delta}(\beta)$. Therefore $D_{n,\delta}(\alpha)<D_{n,\delta}(\beta)$.

    Finally, if $m<n$ and $\eta\in K_m D_{n,\delta}(\alpha)$ then $\eta\in K_m\alpha$ or $K_m\delta$, so in either case $\eta$ is bounded $D_{m,\delta}(\beta)$.
  \item Let $\gamma'=\gamma[v_n\mapsto D_{n,\delta}(\alpha)]$. We have $\gamma'<\beta$, so also $\omega^{\Omega_n\#\gamma'}\# D_{n,\delta}(\alpha)<\omega^{\Omega_n\#\beta}\#\delta$. We have $K_n \omega^{\Omega_n\#\gamma'}\# D_{n,\delta}(\alpha)\subseteq K_n\gamma\cup K_n\delta \cup K_n\alpha$, so in particular every element is bounded by an element of $K_n\beta\cup K_n\delta$, so $D_{n,D_{n,\delta}(\alpha)}(\gamma')<D_{n,\delta}(\beta)$.

    Similarly, if $m<n$ and $\eta\in K_m D_{n,D_{n,\delta}}(\gamma')$ then $\eta\in K_m\gamma\cup K_m\delta\cup K_m\alpha\cup\{D_{n,\delta}(\alpha)\}$ and is therefore bounded by $D_{m,\delta}(\beta)$.
  \end{enumerate}
\end{proof}

\section{Buchholz's Ordinal, Polymorphically}

\subsection{Ordinal Terms}

The presence of terms with variables invites us to consider the term $\vartheta_1(\Omega_1\#v_1)$ as giving a function on ordinals of formal cardinality $0$, mapping $\alpha$ to $f(\alpha)=\vartheta_1(\Omega_1\#\alpha)$.

We would like to extend this function to ordinals of larger formal cardinality in the following way: we would like to take $f(\Omega_1)=\vartheta_2(\Omega_2\#\Omega_1)$. That is, we would like to interpret $\Omega_1$ ``polymorphically'' so that it always means ``the next cardinal''.\footnote{The approach here is clearly closely related to the understanding of the Howard--Bachmann ordinal in terms of dilators \cite{MR656793,girard_logical_complexity,MR3994445}.}

With this modification, we no longer want the family of terms $\Omega_n$ with corresponding collapsing functions $\vartheta_n$; instead we want a single term, $\Omega$, with a single collapsing function $\vartheta$.

However we need to write terms like the result of the substitution $(\vartheta(\Omega\#v))[v\mapsto\Omega]$; this cannot be $\vartheta(\Omega\#\Omega)$, since the two $\Omega$'s are different. Our analogy to free variables helps us here: we think of $\Omega$ as a free variable, $\vartheta$ as the corresponding binder, and we use de Bruijn indices to distinguish different ``levels'' of $\Omega$. For reasons that will become apparent, it will be helpful to have our levels go down rather than up, so we will write $\Omega^{(J)}$, for various integers $J\leq 0$, to help us distinguish different copies\footnote{It feels a trifle silly to trumpet how we're going to remove the subscripts and then right away replace them with superscripts, but we have actually made a meaningful change here, and the superscripts will behave rather differently.} of $\Omega$.

Formally, $-(J-1)$ counts the number of applications of $\vartheta$ it takes to bind $\Omega$. We will sometimes use color coding to match applications of $\vartheta$ with the corresponding bound copies of $\Omega$ (and write unbound copies of $\Omega$ in black). So ${\color{blue}\vartheta}({\color{blue}\Omega}^{(0)}\#v)$ is the function we were considering above, and ${\color{blue}\vartheta}({\color{blue}\Omega}^{(0)}\#v)[v\mapsto\Omega^{(0)}]={\color{blue}\vartheta}({\color{blue}\Omega}^{(0)}\#\Omega^{(-1)})$. (One confusing aspect of de Bruijn indices, which takes some getting used to, is that the same variable is written differently in different contexts. For instance, in $\Omega^{(0)}\#{\color{blue}\vartheta}({\color{blue}\Omega}^{(0)}\#\Omega^{(-1)})$, the two black copies of $\Omega$ are \emph{the same}, even though the index has changed: moving inside the $\vartheta$ has decremented the index.)

We can restate the ordinal notation for Buchholz's ordinal using this approach.

\begin{definition}
  We define the ordinal terms $\mathrm{OT}^{poly}_{\Omega_\omega}$ together with a distinguished subset $\mathbb{H}$ by:
  \begin{itemize}
  \item if $\{\alpha_0,\ldots,\alpha_{n-1}\}$ is a finite multi-set of elements of $\mathrm{OT}^{poly}_{\Omega_\omega}$ in $\mathbb{H}$ and $n\neq 1$ then $\#\{\alpha_0,\ldots,\alpha_{n-1}\}$ is in $\mathrm{OT}^{poly}_{\Omega_\omega}$,
  \item if $\alpha$ is in $\mathrm{OT}^{poly}_{\Omega_\omega}$ then $\omega^\alpha$ is in $\mathbb{H}$,
  \item for each integer $J\leq 0$, $\Omega^{(J)}$ is in $\mathbb{H}$,
  \item for any $\alpha$ in $\mathrm{OT}^{poly}_{\Omega_\omega}$, $\vartheta\alpha$ is in $\mathbb{H}$.
  \end{itemize}

  We define $\mathbb{SC}$ to be all ordinal terms in $\mathbb{H}$ \emph{not} of the form $\omega^\alpha$.
\end{definition}

We next need to define the ordering. As a first step, how should we compare $\Omega^{(0)}$ and $\Omega^{(-1)}$? In the term ${\color{blue}\vartheta}({\color{blue}\Omega}^{(0)}\#\Omega^{(-1)})$, $\Omega^{(0)}$ is ``bound'' (that is, has been collapsed by a $\vartheta$) while $\Omega^{(-1)}$ is ``free''. We always collapse larger cardinals first, so we must have $\Omega^{(0)}>\Omega^{(-1)}$. The comparison should depend only on the order of the indices, so we must have $\Omega^{(J)}<\Omega^{(J')}$ when $J<J'$. (We have chosen to use non-positive indices precisely to make the map $J\mapsto\Omega^{(J)}$ order-preserving rather than order reversing.)

This immediately, of course, makes the full system ill-founded. We will resolve this below by normalizing terms based on the smallest cardinality appearing.

It is helpful to think of formal cardinality in this system as being a relative notion---$\Omega^{(0)}$ means \emph{the current cardinality}, $\Omega^{(-1)}$ means the cardinality one smaller, and so on. This will come up repeatedly in the definitions below: we will set out to define some notion, but in order to make the induction go through, we will have to keep track of how many times we have gone inside a $\vartheta$.

To begin with, let us define the notion of the formal cardinality of a term. We choose to preserve the ordering from the previous section, so large cardinalities are large; this means our cardinalities will normally be negative numbers, and we will take $-\infty$ to be the countable cardinality.
\begin{definition}
  We define $\mathbb{FC}^{\leq J}(\alpha)$ inductively by:
  \begin{itemize}
  \item $\mathbb{FC}^{\leq J}(\#\{\alpha_i\})=\bigcup_i\mathbb{FC}^{\leq J}(\alpha_i)$,
  \item $\mathbb{FC}^{\leq J}(\omega^\alpha)=\mathbb{FC}^{\leq J}(\alpha)$,
  \item $\mathbb{FC}^{\leq J}(\Omega^{(J')})=\left\{\begin{array}{ll}
                                                     \{J'-J\}&\text{if }J'\leq J\\
                                                      \emptyset&\text{if }J'>J
                                                    \end{array}\right.$
  \item $\mathbb{FC}^{\leq J}(\vartheta\alpha)=\mathbb{FC}^{\leq J-1}\alpha$.
  \end{itemize}

  We define $\overline{\mathbb{FC}}^{\leq J}(\alpha)=\sup\mathbb{FC}^{\leq J}(\alpha)$ where $\sup\emptyset=-\infty$. %
\end{definition}
For instance, $\overline{\mathbb{FC}}(\Omega^{(-1)}\#\Omega^{(-2)}\#\Omega^{(-2)})=-1$---the largest ``cardinal'' appearing is $\Omega^{(-1)}$. We should think of this as saying that this term has cardinality one smaller than the cardinality of our ``starting point''. Similarly, $\overline{\mathbb{FC}}({\color{blue}\vartheta}({\color{blue}\Omega}^{(0)}\#\Omega^{(-1)}))=0$. That is, $\mathbb{FC}$ is telling us which uncollapsed cardinals appearing inside $\alpha$, with their levels reinterpreted to be relative to our starting point.

The next thing we need to define is the critical subterms $K\alpha$. Consider the term ${\color{red}\vartheta}{\color{blue}\vartheta}({\color{blue}\Omega}^{(0)}\#{\color{red}\Omega}^{(-1)})$. This is the term we would previous have written as $\vartheta_1\vartheta_2(\Omega_2\#\Omega_1)$, so it should have no critical subterms. On the other hand, ${\color{red}\vartheta}{\color{blue}\vartheta}({\color{blue}\Omega}^{(0)})$ is analogous to $\vartheta_1\vartheta_1\Omega_1$, so does have a critical subterm ${\color{blue}\vartheta}{\color{blue}\Omega}^{(0)}$.

How does $K$ know the difference? The answer is that $K$ knows that it is trying to collect terms $<\Omega^{(0)}$, and so should only pick up subterms which are themselves $<\Omega^{(0)}$. As we define this inductively, our ``position'' in the relative hierarchy of cardinals changes: say $\vartheta\alpha$ has cardinality too large, so $K\vartheta\alpha$ is supposed to be $K\alpha$; moving inside the $\vartheta$ shifts our position in the cardinals, so the thing we called $\Omega^{(0)}$ outside of $\vartheta\alpha$ is the thing called $\Omega^{(1)}$ inside $\alpha$.

There is a further complication. Consider the term ${\color{red}\vartheta}({\color{blue}\vartheta}({\color{blue}\Omega}^{(0)}\#\Omega^{(-2)})\#{\color{red}\Omega}^{(-1)})$. This is analogous\footnote{We should highlight here that the notation given by $\mathrm{OT}^{poly}_{\Omega_\omega}$ is \emph{not} trivially isomorphic to $\mathrm{OT}_{\Omega_\omega}$.} to $\vartheta_2(\vartheta_2(\Omega_2\#\Omega_1)\#\Omega_2)$---in particular, we should have $K({\color{blue}\vartheta}({\color{blue}\Omega}^{(0)}\#\Omega^{(-2)})\#{\color{red}\Omega}^{(-1)})=\{{\color{blue}\vartheta}({\color{blue}\Omega}^{(0)}\#\Omega^{(-1)})\}$---that is, we ought to shift the index down to reflect that it is no longer inside ${\color{red}\vartheta}$.

So we need to define the notion of shifting the indices in a term up or down to represent ``the same term from a different perspective''.\footnote{All this shifting might lead one to be skeptical of the entire ``relative cardinality'' approach. We could instead have tried to define cardinality in an absolute way, so that $\Omega^{(0)}$ means the first cardinality wherever it appears. This would change when shifting has to be done, but would not save us from doing it---for instance, we would have ${\color{blue}\vartheta_0}({\color{blue}\Omega}^{(0)}\#v)[v\mapsto\Omega^{(0)}]={\color{blue}\vartheta_1}({\color{blue}\Omega}^{(-1)}\#\Omega^{(0)})$.}

We once again need to include a $\cdot^{\leq J}$ restriction---we should only shift unbound cardinals, leaving bound ones alone.
\begin{definition}
  We define $\alpha^{\leq J}_{\pm n}$ by:
  \begin{itemize}
  \item $\#\{\alpha_i\}^{\leq J}_{\pm n}=\#\{(\alpha_i)^{\leq J}_{\pm n}\}$,
  \item $(\omega^\alpha)^{\leq J}_{\pm n}=\omega^{\alpha^{\leq J}_{\pm n}}$,
  \item $(\Omega^{(J')})^{\leq J}_{\pm n}=\left\{\begin{array}{ll}
                                                   \Omega^{(J'\pm n)}&\text{if }J'\leq J\\
                                                   \Omega^{(J')}&\text{if }J'>J
                                                 \end{array}\right.$
  \item $(\vartheta\alpha)^{\leq J}_{\pm n}=\vartheta(\alpha^{\leq J-1}_{\pm n})$.                 \end{itemize}
\end{definition}

Note that $\alpha^{\leq J}_{+n}$ may not be well-behaved unless $\overline{\mathbb{FC}}^{\leq J}(\alpha)\leq -n$---for instance, we cannot take $(\vartheta\Omega^{(-1)})^{\leq 0}_{+1}$, since increasing the index of $\Omega^{(-1)}$ by $1$ would create a collision. (Maybe more directly, we cannot take $(\Omega^{(0)})^{\leq 0}_{+1}$ because $\Omega^{(1)}$ is not a term.)

\begin{definition}
  We define $K^{< J}\alpha$ inductively by:
  \begin{itemize}
  \item $K^{< J}\#\{\alpha_i\}=\bigcup_i K^{< J}\alpha_i$,
  \item $K^{< J}\omega^\alpha=K^{< J}\alpha$,
  \item $K^{< J}\Omega^{(J')}=\left\{\begin{array}{ll}
                                          \{\Omega^{(J'-(J-1))}\}&\text{if }J'< J\\
                                          \emptyset&\text{if }J'\geq J
                                        \end{array}\right.$,
  \item $K^{< J}\vartheta\alpha=\left\{\begin{array}{ll}
                                            (\vartheta\alpha)^{\leq 0}_{+(1-J)}&\text{if }\overline{\mathbb{FC}}^{\leq 0}(\vartheta\alpha)<J\\
                                            K^{< J-1}\alpha&\text{if }\overline{\mathbb{FC}}^{\leq 0}(\vartheta\alpha)\geq J
                                          \end{array}\right.$
  \end{itemize}
\end{definition}
To help motivate the numbers in the definition above, keep in mind that we are interested in using $K^{<0}\alpha$ to help us understand $\vartheta\alpha$. $K^{<J}\beta$ means we are looking at some subterm of $\alpha$ which is inside $-J$ applications of $\vartheta$, and when we pull subterms into the set $K^{<J}\beta$, we wish to shift them by enough so that we can correctly compare them to the term $\vartheta\alpha$.

Having adjusted the definitions, the definition of the order is almost unchanged.
\begin{definition}
  We define the ordering $\alpha<\beta$ by:
  \begin{itemize}
\item $\#\{\alpha_i\}<\#\{\beta_j\}$ if there is some $\beta_{j_0}\in\{\beta_j\}\setminus\{\alpha_i\}$ such that, for all $\alpha_i\in\{\alpha_i\}\setminus\{\beta_j\}$, $\alpha_i<\beta_{j_0}$,
\item if $\beta\in\mathbb{H}$ then:
  \begin{itemize}
  \item $\beta<\#\{\alpha_i\}$ if there is some $i$ with $\beta\leq\alpha_i$,
  \item $\#\{\alpha_i\}<\beta$ if, for all $i$, $\alpha_i<\beta$,
  \end{itemize}
\item $\omega^\alpha<\omega^\beta$ if $\alpha<\beta$,
\item if $\beta\in\mathbb{SC}$ then:
  \begin{itemize}
  \item $\beta<\omega^\alpha$ if $\beta<\alpha$,
  \item $\omega^\alpha<\beta$ if $\alpha\leq\beta$,
  \end{itemize}
\item $\Omega^{(J)}<\Omega^{(J')}$ if $J<J'$,
\item $\Omega^{(J)}<\vartheta\alpha$ if there is a $\beta\in K^{< 0}\alpha$ with $\Omega^{(J)}\leq\beta$,
\item $\vartheta\alpha<\Omega^{(J)}$ if for all $\beta\in K^{< 0}\alpha$, $\beta<\Omega^{(J)}$,
\item $\vartheta\alpha<\vartheta\beta$ if:
  \begin{itemize}
  \item $\alpha<\beta$ and, for all $\gamma\in K^{< 0}\alpha$, $\gamma<\vartheta\beta$, or
  \item $\beta<\alpha$ and there is some $\gamma\in K^{< 0}\beta$ so that $\vartheta\alpha\leq\gamma$.
  \end{itemize}
\end{itemize}
\end{definition}

\subsection{Well-Foundedness}

Since $\Omega^{(0)}>\Omega^{(-1)}>\cdots$, the ordinal terms are certainly not well-founded on their face. This is because our notation looks at ordinal terms ``from above'', fixing $\Omega^{(0)}$ to be the highest cardinality. We should instead compare ``from below'', demanding the ordinals share the same ground.

\begin{definition}
  For any $\alpha$, we define $\mathbb{G}(\alpha)=\min\mathbb{FC}(\alpha)$ or $\{-\infty\}$ if $\mathbb{FC}(\alpha)=\emptyset$.
\end{definition}
We call $\mathbb{G}(\alpha)$ the \emph{ground} of $\alpha$---it is the lowest cardinality appearing free, so we think of it as the ``base level'' of the ordinal. We would like to compare ordinals by positioning them to share the same ground.

Given an ordinal $\alpha$, we write $\alpha^*=\left\{\begin{array}{ll}\alpha&\text{if }\overline{\mathbb{FC}}(\alpha)=-\infty\\\alpha^{\geq 0}_{-\overline{\mathbb{FC}}(\alpha)}&\text{otherwise}\end{array}\right.$. We may think of this as a normalized version of $\alpha$, in the sense that we have arranged pushed the cardinalities up as high as we are allowed to; in particular, $\overline{\mathbb{FC}}(\alpha^*)\in\{-\infty,0\}$.

We could think of ordinal terms with $\overline{\mathbb{FC}}(\alpha)=0$ and $\mathbb{G}(\alpha)\geq n$ as being the terms of cardinality $\Omega_{n+1}$ by interpreting $\Omega^{(i)}$ as $\Omega_{n+i+1}$.

We will define two collections of sets $\mathrm{Acc}_n\subseteq M_n$ for $n\in\{-\infty\}\cup \mathbb{N}$.
\begin{definition}
  We let $M_{-\infty}$ be all ordinal terms of formal cardinality $-\infty$. Given $M_n$, we let $\mathrm{Acc}_n$ be the well-founded part of $M_n$.

  For $n\in\mathbb{N}$, given $\mathrm{Acc}_i$ for $i<n$, we set $M_n$ to be the set of ordinal terms $\alpha$ with $\overline{\mathbb{FC}}(\alpha)=0$, $\mathbb{G}(\alpha)\geq -n$, and $\{\beta^*\mid \beta\in K^{< 0}\alpha\}\subseteq\bigcup_{i<n}\mathrm{Acc}_i$.
\end{definition}
Note that when $\beta\in K^{< 0}\alpha$, we always have $\overline{\mathbb{FC}}(\beta)<\overline{\mathbb{FC}}(\alpha)$ and therefore $\mathbb{G}(\beta^*)>\mathbb{G}(\alpha)$.%

The construction of $\mathrm{Acc}_n$ requires an application of $\Pi^1_1$ comprehension using $\bigcup_{i<n}\mathrm{Acc}_n$ as a parameter. In particular, contructing the whole collection of sets $\mathrm{Acc}_n$ for all $n$ cannot be done in $\Pi^1_1\mhyphen\mathrm{CA}_0$.

\begin{lemma}
  Whenever $m\leq n$, $\alpha\in\mathrm{Acc}_m$, and $\beta\in\mathrm{Acc}_n$, we have $\alpha^{\geq 0}_{-(n-m)}\#\beta\in\mathrm{Acc}_n$.
\end{lemma}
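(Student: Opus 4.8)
The plan is to split the statement into (i) membership of $\alpha^{\geq 0}_{-(n-m)}\#\beta$ in $M_n$, which is a direct computation, and (ii) the assertion that this term lies in the well-founded part of $M_n$, which is the substantive part and which I would prove by a nested transfinite induction. For (i): since $\alpha\in M_m$ has $\overline{\mathbb{FC}}(\alpha)=0$ and $\mathbb{G}(\alpha)\geq -m$, the down-shift has $\overline{\mathbb{FC}}(\alpha^{\geq 0}_{-(n-m)})=-(n-m)$ and $\mathbb{G}(\alpha^{\geq 0}_{-(n-m)})\geq -n$; combining this with $\overline{\mathbb{FC}}(\beta)=0$, $\mathbb{G}(\beta)\geq -n$ gives $\overline{\mathbb{FC}}=0$ and $\mathbb{G}\geq -n$ for the sum. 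For the critical-subterm clause I would first record the auxiliary fact that, whenever the shift is legal, $\{\xi^{*}\mid\xi\in K^{<0}(\gamma^{\geq 0}_{-k})\}=\{\xi^{*}\mid\xi\in K^{<0}(\gamma)\}$ --- proved by induction on $\gamma$ from the definitions of $K^{<0}$, of $\cdot^{\geq J}_{\pm n}$ and of $\cdot^{*}$, using the already-noted fact that any $\xi\in K^{<0}\gamma$ has $\overline{\mathbb{FC}}(\xi)<\overline{\mathbb{FC}}(\gamma)$, so that the normalization $\cdot^{*}$ exactly undoes the shift on critical subterms. Since $K^{<0}$ distributes over $\#$, the required containment then reduces to $\{\xi^{*}\mid\xi\in K^{<0}\alpha\}\subseteq\bigcup_{i<n}\mathrm{Acc}_i$, which follows from $\alpha\in M_m$ (giving containment in $\bigcup_{i<m}\mathrm{Acc}_i$) and $m\leq n$.

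For (ii) I would induct along the well-founded relation $<$ on $\mathrm{Acc}_n$ in the variable $\beta$ and, nested inside, along $<$ on $\mathrm{Acc}_m$ in the variable $\alpha$ (so the induction hypothesis covers every pair $(\alpha'',\beta'')$ with $\beta''<\beta$ and every $(\alpha'',\beta)$ with $\alpha''<\alpha$). Since $\mathrm{Acc}_n$ is exactly the set of $\delta\in M_n$ with no infinite $<$-descending $M_n$-sequence starting at $\delta$, it is downward closed in $M_n$, and it suffices to show that every $\delta\in M_n$ with $\delta<\alpha'\#\beta$ lies in $\mathrm{Acc}_n$, where $\alpha':=\alpha^{\geq 0}_{-(n-m)}$. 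Here I would use the multiset decomposition forced by the ``$\#$'' clause of the ordering: writing $\delta,\alpha',\beta$ as multisets of their $\mathbb{H}$-parts, the relation $\delta<\alpha'\#\beta$ supplies a witness part $\rho$ of $\alpha'\#\beta$ lying above all the ``new'' parts of $\delta$, so that $\delta\leq\hat\alpha'\#\hat\beta$, where one of $\hat\alpha',\hat\beta$ is obtained from $\alpha'$ (resp.\ $\beta$) by deleting one copy of $\rho$ and splicing in the new parts of $\delta$, and the other is unchanged. If $\rho$ came from $\beta$ one gets $\hat\beta<\beta$ with $\hat\beta\in M_n$ --- grounds and critical subterms being controlled because $\delta\in M_n$ controls those of its parts --- hence $\hat\beta\in\mathrm{Acc}_n$ by the induction hypothesis and $\delta\in\mathrm{Acc}_n$ by downward closure; if $\rho$ came from $\alpha'$ one gets $\hat\alpha'<\alpha'$ and recovers $\hat\alpha:=(\hat\alpha')^{*}\in\mathrm{Acc}_m$ with $\hat\alpha<\alpha$ and $\hat\alpha'=\hat\alpha^{\geq 0}_{-(n-m)}$ (using that $\cdot^{*}$ is a uniform relabeling, hence order-preserving between terms of equal top formal cardinality), then applies the inner hypothesis. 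Apart from the bookkeeping, this is exactly the classical proof that a commutative sum of accessible ordinals is accessible.

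The step I expect to be the main obstacle is this bookkeeping in the ``$\rho$ from $\alpha'$'' subcase: after the $\cdot^{*}$-renormalization, the new parts of $\delta$ spliced into $\hat\alpha'$ must satisfy the critical-subterm clause of $M_m$, i.e.\ have their normalized critical subterms in $\bigcup_{i<m}\mathrm{Acc}_i$, whereas $\delta\in M_n$ only furnishes membership in the larger set $\bigcup_{i<n}\mathrm{Acc}_i$ (and there is a parallel edge case where the deletion of $\rho$ strictly lowers the top formal cardinality of $\hat\alpha'$ below $-(n-m)$, so that $\hat\alpha'$ is not literally of the form $\hat\alpha^{\geq 0}_{-(n-m)}$ for a top-level-$0$ term $\hat\alpha$). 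Closing this gap appears to require either an additional outer induction (on $n$, or on $m$) so as to invoke the lemma at a smaller level $m'<m$ matching $\overline{\mathbb{FC}}(\hat\alpha')$, or a companion statement about how the $\mathrm{Acc}_i$'s at different levels relate under raising of the ground, or choosing the multiset witness $\rho$ so that the spliced parts always land on the ``$\beta$'' side --- exploiting that when $n>m$ the top formal cardinality of $\alpha'\#\beta$ comes entirely from $\beta$, so any discrepancy at the top level already forces $\rho$ to come from $\beta$. All the genuinely new content is in verifying that the shift operations $\cdot^{\geq J}_{\pm n}$ and the relative-cardinality normalization $\cdot^{*}$ commute with $K^{<0}$ and with the ordering; the comparison clauses themselves are unchanged from the non-polymorphic setting.
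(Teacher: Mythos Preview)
Your analysis is careful and you have correctly put your finger on the real obstacle. The paper's proof differs from your proposal in exactly the two places you suspect.

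First, the paper does take an outer induction on $m$, then on $\alpha\in\mathrm{Acc}_m$, and only then on $\beta\in\mathrm{Acc}_n$. So the induction order is $(m,\alpha,\beta)$ lexicographically, not $(\beta,\alpha)$ as you propose. Your suggestion of adding an outer induction on $m$ is precisely what is done.

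Second, and more importantly, the paper does not try to build a single $\hat\alpha<\alpha$ by splicing the new parts of $\delta$ into $\alpha'$. Instead it partitions \emph{all} the $\mathbb{H}$-components $\gamma_i$ of $\gamma:=\delta$ according to whether $\gamma_i<\alpha'$ or $\gamma_i\geq\alpha'$. The sum $\gamma'$ of the large parts is shown to satisfy $\gamma'\leq\beta$, hence $\gamma'\in\mathrm{Acc}_n$ by downward closure (and the case split $\overline{\mathbb{FC}}(\gamma')=\overline{\mathbb{FC}}(\gamma)$ versus $<$ handles exactly the edge case you worried about, which can only occur when $m=n$). Then one rebuilds $\gamma$ from $\gamma'$ by adjoining the small parts $\gamma_i<\alpha'$ \emph{one at a time}, each time invoking the inductive hypothesis with that single $\gamma_i$ (normalized) playing the role of the new $\alpha$ and the running partial sum playing the role of the new $\beta$. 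Since each such $\gamma_i$ has $\overline{\mathbb{FC}}(\gamma_i)\leq -(n-m)$, its normalization lands at level $\leq m$, so either the $\alpha$-hypothesis (if the level is exactly $m$) or the outer $m$-hypothesis (if strictly smaller) applies.

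This reorganization is what dissolves your obstacle: you never need the spliced term to live in $M_m$, because you never form it. Each small component is treated as a fresh ``$\alpha$'' rather than as material absorbed into the old one; the $M_{m'}$ membership of a single component $\gamma_i^*$ is inherited from $\gamma\in M_n$ together with the ground bound $\mathbb{G}(\gamma_i^*)\geq -m'$, which forces its normalized critical subterms into $\bigcup_{j<m'}\mathrm{Acc}_j$ automatically. Your single-witness-$\rho$ decomposition could be made to work, but it fights the grain of the argument; the paper's ``sort by size relative to $\alpha'$'' is cleaner.
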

\begin{proof}
  We proceed first by induction on $m$, then on $\alpha$, and then by induction on $\beta$. From the definitions, it is easy to see that $\alpha^{\geq 0}_{-(n-m)}\#\beta\in M_n$.

  It suffices to show that, for any $\gamma<\alpha^{\geq 0}_{-(n-m)}\#\beta$ in $M_n$, $\gamma\in\mathrm{Acc}_n$.

  So let $\gamma<\alpha^{\geq 0}_{-(n-m)}\#\beta$ be given. We consider cases for $\gamma$. If $\gamma\in\mathbb{H}$ then either $\gamma\leq\alpha^{\geq 0}_{-(n-m)}$ or $\gamma\leq\beta$. In the latter case, $\gamma\in\mathrm{Acc}_n$ since $\beta$ is. If $\gamma\leq\alpha^{\geq 0}_{-(n-m)}$ then, since $\overline{\mathbb{FC}}(\gamma)\in\{-\infty,0\}$, we must have $m=n$, so $\alpha^{\geq 0}_{-(n-m)}=\alpha$, so $\gamma\in\mathrm{Acc}_n$.

  In the remaining case, $\gamma=\#\{\gamma_i\}$. If $\gamma\leq\beta$ then we immediately have $\gamma\in\mathrm{Acc}_n$, so suppose $\beta<\gamma<\alpha^{\geq 0}_{-(n-m)}\#\beta$. Let $\gamma'=\#\{\gamma_i\mid \alpha^{\geq 0}_{-(n-m)}\leq\gamma_i\}$---that is, the sum where we remove all components $<\alpha^{\geq 0}_{-(n-m)}$.

  We have $\gamma'\leq\gamma<\alpha^{\geq 0}_{-(n-m)}\#\beta$. Suppose $\overline{\mathbb{FC}}(\gamma')=\overline{\mathbb{FC}}(\gamma)$. Then, since $\gamma'\leq\beta$, we have $\gamma'\in\mathrm{Acc}_n$. We may then repeatedly apply the main inductive hypothesis to each $\gamma_i<\alpha^{\geq 0}_{-(n-m)}$ to show that $\gamma\in\mathrm{Acc}_n$ as well.

  If $\overline{\mathbb{FC}}(\gamma')<\overline{\mathbb{FC}}(\gamma)$ then we must have $m=n$ and $\gamma'=0$ and the result follows from $\alpha^{\geq 0}_{-(n-m)}=\alpha\in\mathrm{Acc}_n$ and the main inductive hypothesis.
\end{proof}

\begin{lemma}
  If $\alpha\in\mathrm{Acc}_n$ then $\omega^\alpha\in\mathrm{Acc}_n$.
\end{lemma}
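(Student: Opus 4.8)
The plan is to carry out, in this system, the standard argument that the well-founded part of an ordinal term system is closed under $\beta\mapsto\omega^\beta$, organized by the same outer induction on $n$ that structures the preceding lemmas. So: induct on $n\in\{-\infty\}\cup\mathbb{N}$, and within a fixed $n$ by the well-founded induction on $\alpha\in\mathrm{Acc}_n$. Then the present lemma and the preceding (summing) lemma are available for all smaller $n$, and the present lemma also for the same $n$ and all $\alpha'<\alpha$. Since $K^{<0}$, $\overline{\mathbb{FC}}$ and $\mathbb{G}$ are all insensitive to prefixing $\omega$, $\omega^\alpha\in M_n$ is immediate, so it suffices to show that every $\gamma\in M_n$ with $\gamma<\omega^\alpha$ lies in $\mathrm{Acc}_n$.

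First suppose $\gamma\in\mathbb{H}$. Since $\overline{\mathbb{FC}}(\gamma)=0$ while $\vartheta$-terms have formal cardinality $\le-1$, either $\gamma=\Omega^{(0)}$ or $\gamma=\omega^\beta$. If $\gamma=\Omega^{(0)}$ then $\gamma<\omega^\alpha$ forces $\Omega^{(0)}<\alpha$, hence $\gamma\in\mathrm{Acc}_n$ since $\alpha$ lies in the well-founded part of $M_n$. If $\gamma=\omega^\beta$ then $\beta<\alpha$, and $\beta\in M_n$ by the same insensitivity, so $\beta\in\mathrm{Acc}_n$ and the inductive hypothesis (same $n$, smaller $\alpha$) gives $\omega^\beta\in\mathrm{Acc}_n$.

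The remaining case is $\gamma=\#\{\gamma_i\}$ with each $\gamma_i\in\mathbb{H}$ and $\gamma_i<\omega^\alpha$. The strategy is to show that each normalized summand $\gamma_i^*$ lies in $\mathrm{Acc}_{m_i}$, where $m_i$ is the formal-cardinality level of $\gamma_i^*$ — so $m_i\le n$, with $m_i=n$ exactly when $\overline{\mathbb{FC}}(\gamma_i)=0$ — and then to rebuild $\gamma$ from the $\gamma_i^*$ by iterating the summing lemma (grouping summands of equal formal cardinality and normalizing each group, so that the shifts produced by that lemma return the groups to their original positions). For the displayed claim: when $\overline{\mathbb{FC}}(\gamma_i)=0$ the summand is already in $M_n$ and is covered by the $\mathbb{H}$-analysis above. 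When $\overline{\mathbb{FC}}(\gamma_i)<0$, so $m_i<n$, we use that $K^{<0}\gamma_i\subseteq K^{<0}\gamma$ and that $\{\delta^*:\delta\in K^{<0}(\,\cdot\,)\}$ is unchanged by the index shifts appearing inside $K^{<0}$: when $\gamma_i$ is $\Omega^{(J)}$ or $\vartheta(\,\cdot\,)$ this already places $\gamma_i^*$ into $\bigcup_{j<n}\mathrm{Acc}_j$ (with $\Omega^{(0)}$ vacuously well-founded in every $M_l$, since nothing in $\mathbb{H}$ of formal cardinality $0$ is $<\Omega^{(0)}$), and one descends from $\mathrm{Acc}_l$ to $\mathrm{Acc}_{m_i}$ using $M_{m_i}\subseteq M_l$, the consequent $\mathrm{Acc}_l\cap M_{m_i}\subseteq\mathrm{Acc}_{m_i}$, and the remark that $\mathbb{G}$ strictly increases on $K^{<0}$; when $\gamma_i=\omega^{\beta_i}$ we have $\gamma_i^*=\omega^{\beta_i^*}$ and reduce, via the level-$m_i$ instance of the present lemma, to $\beta_i^*\in\mathrm{Acc}_{m_i}$, handled by an inner induction on the term.

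The $\mathbb{H}$-cases and the summing-lemma recombination are routine bookkeeping. The real obstacle is the last subcase: an $\omega$-exponent summand of negative formal cardinality, whose exponent's normalization need not itself occur among the critical subterms of $\gamma$; this is exactly why the induction must be nested both on $n$ — so that both this lemma and the summing lemma are available at lower levels — and on term structure, rather than being a plain well-founded induction on $\alpha$.
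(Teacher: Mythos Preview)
There is a genuine error in your case analysis for $\gamma\in\mathbb{H}$: the claim that ``$\vartheta$-terms have formal cardinality $\le-1$'' is false. For instance, $\vartheta(\Omega^{(0)}\#\Omega^{(-1)})$ has $\mathbb{FC}^{\le 0}=\mathbb{FC}^{\le -1}(\Omega^{(0)}\#\Omega^{(-1)})=\{0\}$, so its formal cardinality is $0$; more generally, any $\vartheta\beta'$ in which some $\Omega^{(J)}$ with $J<0$ occurs unbound will have formal cardinality $0$ and can perfectly well belong to $M_n$. So the case $\gamma=\vartheta\beta'$ cannot be excluded. The repair is immediate and is exactly what the paper does: $\vartheta\beta'\in\mathbb{SC}$, so $\vartheta\beta'<\omega^\alpha$ gives $\vartheta\beta'\le\alpha$, and since $\alpha\in\mathrm{Acc}_n$ and $\mathrm{Acc}_n$ is downward closed in $M_n$, $\vartheta\beta'\in\mathrm{Acc}_n$. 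The same one-line argument handles $\gamma=\Omega^{(0)}$, so you need not treat it separately.

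Your handling of the sum case is more convoluted than it needs to be, and the closing appeal to ``an inner induction on the term'' for the summand $\gamma_i=\omega^{\beta_i}$ with $\overline{\mathbb{FC}}(\gamma_i)<0$ is not supported by the induction you actually set up (you only declared induction on $n$ and on $\alpha$). The paper avoids this by running a \emph{structural} side induction on $\gamma$: then each summand $\gamma_i$ is structurally simpler than $\gamma$, and one directly gets $\gamma_i\in\mathrm{Acc}_n$ (if $\overline{\mathbb{FC}}(\gamma_i)=0$, by the side IH; if $<0$, by the definition of $M_n$ together with the main IH at a smaller level), after which the preceding summing lemma reassembles $\gamma$. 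If you add this side structural induction and include the $\vartheta$-case as above, your argument goes through and matches the paper's.
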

\begin{proof}
  By induction on $n$ and then on $\alpha$, it suffices to show that for all $\beta<\omega^\alpha$ with $\beta\in M_n$, $\beta\in\mathrm{Acc}_n$.

  We proceed by structural induction. If $\beta=\#\{\beta_i\}$ then each $\beta_i<\omega^\alpha$ and therefore each $\beta_i\in\mathrm{Acc}_n$ (by the side inductive hypothesis if $\overline{FC}(\beta_i)=n$ and by the definition of $M_n$ and the main inductive hypothesis if $\overline{FC}(\beta_i)<n$). Since $\bigcup_i\mathrm{Acc}_i$ is closed under sums by th eprevious lemma, $\beta\in\mathrm{Acc}_n$.

  If $\beta=\omega^{\beta'}$ then $\beta'<\alpha$ and $\beta\in\mathrm{Acc}_n$ by the main inductive hypothesis.

  If $\beta=\vartheta\beta'$ then $\vartheta\beta'\leq\alpha$ and therefore $\vartheta\beta'\in\mathrm{Acc}_n$.
\end{proof}

\begin{lemma}
  If $\alpha\in\mathrm{Acc}_n$ then $\vartheta(\alpha^{\leq 0}_{-1})\in\mathrm{Acc}_n$.
\end{lemma}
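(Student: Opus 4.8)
The plan is to follow the pattern of the two preceding lemmas. Fix $n$ (the interesting case is $n\in\mathbb{N}$; for $n=-\infty$ we have $\alpha^{\leq 0}_{-1}=\alpha$ and the argument below degenerates) and proceed by induction on $n$ and then by $<$-induction on $\alpha\in\mathrm{Acc}_n$; thus we may assume that the lemma holds for all smaller $n$ and that $\vartheta((\alpha')^{\leq 0}_{-1})\in\mathrm{Acc}_n$ for every $\alpha'<\alpha$ in $\mathrm{Acc}_n$. It is enough to check first that $\vartheta(\alpha^{\leq 0}_{-1})\in M_n$ and then that every $\gamma\in M_n$ with $\gamma<\vartheta(\alpha^{\leq 0}_{-1})$ lies in $\mathrm{Acc}_n$.

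For the first point one unwinds the definitions. Writing $\beta=\alpha^{\leq 0}_{-1}$, a shift computation gives $\overline{\mathbb{FC}}^{\leq 0}(\beta)=\overline{\mathbb{FC}}^{\leq 0}(\alpha)-1$, hence $\overline{\mathbb{FC}}(\vartheta\beta)=\overline{\mathbb{FC}}^{\leq -1}(\beta)=\overline{\mathbb{FC}}^{\leq 0}(\alpha)\in\{0,-\infty\}$, and similarly $\mathbb{G}(\vartheta\beta)=\mathbb{G}(\alpha)\geq -n$. Since $\overline{\mathbb{FC}}^{\leq 0}(\vartheta\beta)\geq 0$ we have $K^{<0}(\vartheta\beta)=K^{<-1}\beta$, and a further bookkeeping step shows $K^{<-1}(\alpha^{\leq 0}_{-1})$ has the same image under $\cdot^*$ as $K^{<0}\alpha$, which lies in $\bigcup_{i<n}\mathrm{Acc}_i$ because $\alpha\in M_n$. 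Assembling the arithmetic facts about how $\mathbb{FC}^{\leq J}$, $K^{<J}$, the ordering, and the shift operations interact — in particular that $\cdot^*$ is unaffected by a uniform shift, that $K^{<J}$ commutes with shifting after the obvious reindexing, that shifting up is order-preserving, and that a term with $\overline{\mathbb{FC}}^{\leq 0}\leq -1$ is returned to itself by shifting up and then down — is the one genuinely fiddly ingredient; I would package these into a single auxiliary lemma.

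Granting $\vartheta(\alpha^{\leq 0}_{-1})\in M_n$, the remainder is a structural induction on $\gamma<\vartheta(\alpha^{\leq 0}_{-1})$ with $\gamma\in M_n$. If $\gamma=\#\{\gamma_i\}$ then each $\gamma_i<\vartheta(\alpha^{\leq 0}_{-1})$; those $\gamma_i$ with $\overline{\mathbb{FC}}(\gamma_i)=0$ are handled by the structural hypothesis and those with $\overline{\mathbb{FC}}(\gamma_i)<0$ by the definition of $M_n$ and the induction on $n$, exactly as in the proof of the lemma on $\omega$-powers, after which $\gamma\in\mathrm{Acc}_n$ since $\bigcup_i\mathrm{Acc}_i$ is closed under sums. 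If $\gamma=\omega^{\gamma'}$ then, since a power $\omega^{\gamma'}$ in normal form has $\gamma'\notin\mathbb{SC}$ whereas $\vartheta(\alpha^{\leq 0}_{-1})\in\mathbb{SC}$, we get $\gamma'<\vartheta(\alpha^{\leq 0}_{-1})$, so $\gamma'\in M_n$, so $\gamma'\in\mathrm{Acc}_n$ by the structural hypothesis, and then $\omega^{\gamma'}\in\mathrm{Acc}_n$ by the previous lemma. The case $\gamma=\Omega^{(J)}$ cannot occur: membership in $M_n$ forces $J=0$, but $\Omega^{(0)}\not<\vartheta(\alpha^{\leq 0}_{-1})$ since every element of $K^{<0}(\alpha^{\leq 0}_{-1})$ has $\overline{\mathbb{FC}}<-1<0=\overline{\mathbb{FC}}(\Omega^{(0)})$.

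The main case is $\gamma=\vartheta\gamma'$. In the defining clause for $\vartheta\gamma'<\vartheta(\alpha^{\leq 0}_{-1})$ the second alternative is impossible: it would furnish $\delta\in K^{<0}(\alpha^{\leq 0}_{-1})$ with $\vartheta\gamma'\leq\delta$, but $\overline{\mathbb{FC}}(\delta)<-1<0=\overline{\mathbb{FC}}(\vartheta\gamma')$ (using $\vartheta\gamma'\in M_n$), so $\delta<\vartheta\gamma'$. Hence $\gamma'<\alpha^{\leq 0}_{-1}$; comparing maximal formal cardinalities gives $\overline{\mathbb{FC}}^{\leq 0}(\gamma')\leq -1$, and since $\overline{\mathbb{FC}}^{\leq -1}(\gamma')=\overline{\mathbb{FC}}(\vartheta\gamma')=0$ this forces $\overline{\mathbb{FC}}^{\leq 0}(\gamma')=-1$. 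Let $\gamma'':=(\gamma')^*$, so that $(\gamma'')^{\leq 0}_{-1}=\gamma'$. Using the auxiliary lemma one verifies $\gamma''\in M_n$ — its maximal formal cardinality is $0$, its ground is $\mathbb{G}(\gamma')+1=\mathbb{G}(\vartheta\gamma')\geq -n$, and $K^{<0}\gamma''$ has the same $\cdot^*$-image as $K^{<0}(\vartheta\gamma')$ — and $\gamma''<\alpha$, since shifting up is order-preserving and sends $\gamma'<\alpha^{\leq 0}_{-1}$ to $\gamma''<(\alpha^{\leq 0}_{-1})^*=\alpha$. Therefore $\gamma''\in\mathrm{Acc}_n$, and the inductive hypothesis on $\alpha$ gives $\gamma=\vartheta((\gamma'')^{\leq 0}_{-1})=\vartheta\gamma'\in\mathrm{Acc}_n$, completing the induction. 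The only real obstacle is the shift bookkeeping of the second paragraph; the case analysis is otherwise routine.
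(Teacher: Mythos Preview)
Your overall strategy matches the paper's: main induction on $\alpha\in\mathrm{Acc}_n$, then a structural side induction on $\gamma<\vartheta(\alpha^{\leq0}_{-1})$ in $M_n$. The genuine gap is the assertion, used twice, that every $\delta\in K^{<0}(\alpha^{\leq0}_{-1})$ has $\overline{\mathbb{FC}}(\delta)<-1$. This is false: by definition $K^{<0}$ shifts its output \emph{up} by $+1$. For $n\in\mathbb{N}$ the hypothesis $\alpha\in M_n$ forces $\overline{\mathbb{FC}}(\alpha)=0$, so some $\Omega^{(0)}$ sits at top level in $\alpha$; after the $-1$ shift this becomes $\Omega^{(-1)}$, and $K^{<0}(\Omega^{(-1)})=\{\Omega^{(0)}\}$, which has $\overline{\mathbb{FC}}=0$. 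More generally the elements of $K^{<0}(\alpha^{\leq0}_{-1})$ are exactly the top-level $\mathbb{SC}$-subterms of $\alpha$, reindexed back to $\alpha$'s own levels, so their $\overline{\mathbb{FC}}$ ranges all the way up to $0$.

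Both places you invoke this bound then fail. The case $\gamma=\Omega^{(0)}$ \emph{does} occur; the paper disposes of it by noting that $\Omega^{(0)}$ is the $<$-least element of $M_n$, hence trivially in $\mathrm{Acc}_n$. In the second disjunct for $\gamma=\vartheta\gamma'$ one can likewise have $\vartheta\gamma'\leq\delta$ with $\overline{\mathbb{FC}}(\delta)=0$: for instance, if $\alpha$ is itself a $\vartheta$-term then $\alpha\in K^{<0}(\alpha^{\leq0}_{-1})$, and the disjunct holds with $\gamma=\delta=\alpha$. So that branch is not vacuous for $n\in\mathbb{N}$. The paper's treatment here passes through a similar ``$\overline{\mathbb{FC}}(\delta)<0$'' step and concludes that this disjunct forces $n=-\infty$ --- precisely the case you set aside as degenerate --- finishing with $\gamma\leq\delta\leq\alpha\in\mathrm{Acc}_{-\infty}$. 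Whatever the cleanest formulation, the branch needs a real argument (using that $\delta$ is, after shift-cancellation, a subterm of $\alpha$) rather than a cardinality dismissal.
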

\begin{proof}
  This includes the case where $n=-\infty$ (in which case $\alpha^{\leq 0}_{-1}=\alpha$)---this is the case where we do not actually bind any copies of $\Omega$.

  We proceed by main induction on $\alpha$. It suffices to show that, for all $\gamma<\vartheta\alpha^{\geq 0}_{-1}$ with $\gamma\in M_n$, $\gamma\in\mathrm{Acc}_n$. We show this by a structural induction on $\gamma$.

  If $\gamma$ is $\Omega^{(0)}$ then this is immediate, since in this case $\Omega^{(0)}$ is the smallest element of $M_n$. If $\gamma=\#\{\gamma_i\}$ then each $\gamma_i$ in $M_n$ is less than $\vartheta\alpha^{\geq 0}_{-1}$ and thefore in $\mathrm{Acc}_n$. If $\gamma_i$ is in $M_m$ for some $m<n$ then, since $\gamma\in M_n$, we always have $\gamma_i\in\mathrm{Acc}_m$. Then by the lemma above, $\gamma\in\mathrm{Acc}_n$.

  If $\gamma=\omega^{\gamma'}$ then $\gamma'<\vartheta\alpha^{\geq 0}_{-1}$, so $\gamma'\in\mathrm{Acc}_n$, so $\gamma\in\mathrm{Acc}_n$.

  So suppose $\gamma=\vartheta\beta$. If $\beta<\alpha^{\geq 0}_{-1}$ then $\overline{\mathbb{FC}}(\beta)<0$, so $\beta$ is precisely of the form $\delta^{\geq 0}_{-1}$ for some $\delta$ with $\overline{\mathbb{FC}}^{\geq 0}(\delta)\in\{-\infty,0\}$. We wish to show that $\delta\in M_n$, so consider some $\delta'\in K^{< 0}\delta$. Then we have $(\delta')^{\geq 0}_{-1}\in K^{< 0}\beta$, so $(\delta')^{\geq 0}_{-1}<\vartheta\alpha^{\geq 0}_{-1}$. If $\overline{\mathbb{FC}}^{\geq 0}(\delta')=\overline{\mathbb{FC}}(\gamma)$ then the inductive hypothesis applies and $((\delta')^{\geq 0}_{-1})^*\in\mathrm{Acc}_n$. Otherwise $\overline{\mathbb{FC}}^{\geq 0}(\delta')<\overline{\mathbb{FC}}(\gamma)$ and, so there is $\delta''\in K^{< 0}\gamma$ with $(\delta'')^*=(\delta')^*$, and so since $\gamma\in M_n$, $(\delta')^*\in\mathrm{Acc}_m$ for the correct $m$. It follows that $\delta\in M_n$, so by the inductive hypothesis, $\gamma=\vartheta\delta^{\geq 0}_{-1}\in\mathrm{Acc}_n$.

  Otherwise, suppose $\alpha^{\geq 0}_{-1}<\beta$. Then we must have some $\delta\in K^{< 0}\alpha^{\geq 0}_{-1}$ so that $\vartheta\beta\leq\delta$. We must have $\overline{\mathbb{FC}}(\delta)<0$, and since $\gamma\in M_n$, it follows that $\overline{\mathbb{FC}}(\delta)=\overline{\mathbb{FC}}(\gamma)=-\infty$, so $n=-\infty$. It follows that $\delta\leq\alpha^{\geq 0}_{-1}=\alpha$, and therefore $\delta\in\mathrm{Acc}_{n}$.
\end{proof}

\begin{lemma}
  If $\alpha\in\mathrm{Acc}_n$ and $n>-\infty$ then $(\vartheta\alpha)^*\in\bigcup_{m<n}\mathrm{Acc}_m$.
\end{lemma}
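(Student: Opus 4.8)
The plan is to follow the template of the three preceding lemmas: first pin down the level $m<n$ at which $(\vartheta\alpha)^{*}$ lives, confirm $(\vartheta\alpha)^{*}\in M_{m}$, and then run an induction whose real content is that everything below $(\vartheta\alpha)^{*}$ in $M_{m}$ is already accessible.

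\emph{Identifying $m$.} Since $\alpha\in M_{n}$, $\overline{\mathbb{FC}}(\alpha)=0$; write $\mathbb{FC}(\alpha)=\{0\}\cup\{k_{1}<\dots<k_{r}\}$ with all $k_{i}<0$ and $k_{1}=\mathbb{G}(\alpha)\ge-n$. If $r=0$ the conclusion is immediate: $\mathbb{FC}(\vartheta\alpha)=\emptyset$, so $(\vartheta\alpha)^{*}=\vartheta\alpha$, which lies in the well-founded part of $M_{-\infty}\subseteq\bigcup_{m<n}\mathrm{Acc}_{m}$ by the same structural argument given below, in miniature. If $r>0$, the $\mathbb{FC}^{\le J}$ clauses give $\mathbb{FC}(\vartheta\alpha)=\mathbb{FC}^{\le-1}(\alpha)=\{k_{1}+1,\dots,k_{r}+1\}$, so $\overline{\mathbb{FC}}(\vartheta\alpha)=k_{r}+1$ and $\mathbb{G}(\vartheta\alpha)=k_{1}+1$; hence $(\vartheta\alpha)^{*}=\vartheta\hat\alpha$ for the re-indexing $\hat\alpha$ of $\alpha$ obtained by the shift $(\cdot)_{-(k_{r}+1)}$, with $\mathbb{G}((\vartheta\alpha)^{*})=k_{1}-k_{r}$, so $(\vartheta\alpha)^{*}\in M_{m}$ for $m:=k_{r}-k_{1}$, and $0\le m\le n-1$ because $k_{r}\le-1$, $k_{1}\ge-n$. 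To confirm $(\vartheta\alpha)^{*}\in M_{m}$ one must check the third clause of the definition of $M_{m}$: each $\beta\in K^{<0}((\vartheta\alpha)^{*})=K^{<-1}\hat\alpha$ is a re-indexing of an element of $K^{<0}\alpha$, so $\beta^{*}$ is among $\{\gamma^{*}:\gamma\in K^{<0}\alpha\}\subseteq\bigcup_{i<n}\mathrm{Acc}_{i}$ (from $\alpha\in M_{n}$), and the ground estimate recorded after the definition of $M_{n}$ forces $\mathbb{G}(\beta^{*})>-m$, putting $\beta^{*}$ into $\bigcup_{m'<m}\mathrm{Acc}_{m'}$.

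\emph{The induction.} I would argue by transfinite induction on $\alpha$ along the well-founded order of $\mathrm{Acc}_{n}$. Given $(\vartheta\alpha)^{*}=\vartheta\hat\alpha\in M_{m}$, it suffices to show that every $\gamma\in M_{m}$ with $\gamma<\vartheta\hat\alpha$ lies in $\mathrm{Acc}_{m}$, which I do by a subsidiary structural induction on $\gamma$. The cases $\gamma=\#\{\gamma_{i}\}$ and $\gamma=\omega^{\gamma'}$ go exactly as in the preceding three proofs: each proper subterm is $<\vartheta\hat\alpha$, hence accessible by the side hypothesis, or (for components of larger ground) directly by the definition of $M_{m}$, and one closes up using the lemma that $\bigcup_{i\le m}\mathrm{Acc}_{i}$ is closed under $\#$ and the lemma on $\omega^{(\cdot)}$. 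The only real case is $\gamma=\vartheta\beta$: unwinding $\vartheta\beta<\vartheta\hat\alpha$ via the last ordering clause, either $\hat\alpha<\beta$ and some $\eta\in K^{<0}\hat\alpha$ has $\vartheta\beta\le\eta$, or $\beta<\hat\alpha$ and every element of $K^{<0}\beta$ is $<\vartheta\hat\alpha$. In the first subcase $\eta$ is a re-indexing of an element of $K^{<0}\alpha$, so $\eta^{*}\in\bigcup_{m'<m}\mathrm{Acc}_{m'}$ by the analysis above, and $\gamma\le\eta$ together with downward closure of accessibility within $M_{m}$ gives $\gamma\in\mathrm{Acc}_{m}$. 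In the second subcase---imitating the $\delta=\delta^{\ge0}_{-1}$ manoeuvre from the preceding lemma---I would write $\beta=(\beta^{\sharp})_{-(k_{r}+1)}$ for a term $\beta^{\sharp}<\alpha$, use the side condition on $K^{<0}\beta$ to verify $\beta^{\sharp}\in M_{n}$, hence $\beta^{\sharp}\in\mathrm{Acc}_{n}$ because $\alpha$ is, and then apply the inductive hypothesis to $\beta^{\sharp}$: $(\vartheta\beta^{\sharp})^{*}=(\vartheta\beta)^{*}=\gamma^{*}\in\bigcup_{m'<n}\mathrm{Acc}_{m'}$, whence $\gamma\in\mathrm{Acc}_{m}$ since $\gamma\in M_{m}$.

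\emph{The obstacle.} The induction scheme itself is routine once one has the auxiliary facts about the re-indexing operators $(\cdot)^{\le J}_{\pm n}$ and the normalization $(\cdot)^{*}$: that they are order-preserving and injective on their natural domains, that $K^{<0}$ commutes with $(\cdot)^{*}$ up to re-indexing, and that membership in---and the well-founded part of---$M_{m}$ is stable under passing between a term and its normalization, and under changing the index $m$ so long as the ground is respected. Making these precise, and getting the value of $m$ uniformly right (the point where the polymorphic convention genuinely departs from the graded one), is where the real work lies, just as it was---largely implicitly---in the three proofs above.
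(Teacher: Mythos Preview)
Your approach is essentially the paper's: main transfinite induction on $\alpha\in\mathrm{Acc}_n$, side structural induction on $\gamma<(\vartheta\alpha)^{*}$ in $M_m$, with the substantive case $\gamma=\vartheta\beta$ split according to the last ordering clause. The one genuinely useful move the paper makes that you do not is a preliminary normalization that dissolves what you call ``the obstacle'': the paper first reduces to $\alpha$ with $\{0,-1\}\subseteq\mathbb{FC}(\alpha)$ or $\mathbb{FC}(\alpha)=\{0\}$ (the remaining cases being handled by the previous lemma, or by replacing $\alpha$ with an $\alpha'$ satisfying $(\vartheta\alpha')^{*}=(\vartheta\alpha)^{*}$). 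Under this assumption $(\vartheta\alpha)^{*}=\vartheta\alpha$, so there is no $\hat\alpha$, no $k_r$, and none of the re-indexing bookkeeping you anticipate; the commutation lemmas for $(\cdot)^{\le J}_{\pm n}$, $K^{<0}$, and $(\cdot)^{*}$ are never invoked.

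Two smaller points. In the subcase $\beta<\hat\alpha$ (the paper's $\gamma'<\alpha$), you say you will ``use the side condition on $K^{<0}\beta$ to verify $\beta^{\sharp}\in M_n$''; the paper's verification is slightly more delicate than this suggests: it splits $K^{<0}\gamma'$ into those $\delta$ that already lie in $K^{<0}\gamma$ (accessible because $\gamma\in M_m$) and those with $\overline{\mathbb{FC}}(\delta)=-1$ (these are $<\vartheta\alpha$ and hence accessible by the \emph{side} inductive hypothesis, not the main one). You allude to this via the preceding lemma's manoeuvre, but it deserves to be made explicit. Second, the paper separately disposes of the case $\overline{\mathbb{FC}}(\gamma')<0$ by invoking the previous lemma directly; your sketch does not isolate this case.
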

\begin{proof}
We focus on $\alpha$ with either $\{0,-1\}\subseteq\mathbb{FC}(\alpha)$ or $\mathbb{FC}(\alpha)=\{0\}$. (If $0\not\in\mathbb{FC}(\alpha)$, we are covered by the previous lemma. If $-1\not\in\mathbb{FC}(\alpha)$ but $\overline{\mathbb{FC}}(\vartheta\alpha)>-\infty$ then there is an $\alpha'$ with $\{0,-1\}\subseteq\mathbb{FC}(\alpha')$ and $(\vartheta\alpha')^*=(\vartheta\alpha)^*$.)

We proceed by induction on $\alpha\in\mathrm{Acc}_n$ with this property. Since $\mathbb{G}(\vartheta\alpha)<\mathbb{G}(\alpha)$, we certainly have $(\vartheta\alpha)^*\in\bigcup_{m<n}M_m$. Let $m=\mathbb{G}((\vartheta\alpha)^*)$.
  
  It suffices to show that for all $\gamma<(\vartheta\alpha)^*$ in $M_m$, $\gamma\in\mathrm{Acc}_m$. We show this by a side structural induction on $\gamma$.

  If $\gamma$ is $\Omega^{(0)}$, this is immediate since $\Omega^{(0)}$ is the smallest element of formal cardinality $0$. If $\gamma=\#\{\gamma_i\}$ then each $\gamma_i$ is either in $\mathrm{Acc}_k$ for some $k<m$, or $\gamma_i\in M_m$, $\gamma_i<(\vartheta\alpha)^*$, and therefore $\gamma_i\in\mathrm{Acc}_m$ by the side inductive hypothesis. Then, by closure under addition, $\gamma\in\mathrm{Acc}_m$.

  Similarly, if $\gamma=\omega^{\gamma'}$ then $\gamma'<(\vartheta\alpha)^*$, so $\gamma'\in\mathrm{Acc}_m$, and therefore $\omega^{\gamma'}\in\mathrm{Acc}_m$.

  For the main case, suppose $\gamma=\vartheta\gamma'$. If $\overline{\mathbb{FC}}(\gamma')<0$ then, by the previous lemma, $\gamma\in\mathrm{Acc}_m$. Otherwise, since $\vartheta\gamma'\in M_m$, we must have either $\overline{\mathbb{FC}}(\vartheta\gamma')=-\infty$ or $-1\in \mathbb{FC}(\gamma')$. If $\delta\in K^{< 0}\gamma'$ then either $\delta\in K^{< 0}\gamma$ (so $\delta^*\in\mathrm{Acc}_{\mathbb{G}(\delta^*)}$ since $\gamma\in M_m$) or $\delta\in K^{< 0}\gamma'\setminus K^{< 0}\gamma$, and therefore $\overline{\mathbb{FC}}(\delta)=-1$ and then we have $\delta<\vartheta\alpha$ and therefore, by the side inductive hypothesis, $\delta\in\mathrm{Acc}_m$. It follows that $\gamma'\in M_n$, and therefore $\vartheta\gamma'\in\mathrm{Acc}_m$ by the main inductive hypothesis.%
  
  Otherwise, there is some $\delta\in K^{< 0}\alpha$ with $\gamma\leq\delta$, and since $\delta\in\mathrm{Acc}_m$, also $\gamma\in\mathrm{Acc}_m$.
\end{proof}

\begin{theorem}\label{thm:ordinals_wf_idOmega}
  For all $\alpha$ and all $n\geq \mathbb{G}(\alpha^*)$, $\alpha^*\in\mathrm{Acc}_n$.
\end{theorem}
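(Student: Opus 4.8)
The plan is to treat the four preceding lemmas as what they really are: the statements that the family $\{\mathrm{Acc}_n\}$ is closed under the term constructors (with the appropriate index shifts built in), and then to argue by induction on the construction of $\alpha$, equivalently of $\alpha^*$. Since every term of $\mathrm{OT}^{poly}_{\Omega_\omega}$ is built from the atoms $\Omega^{(J)}$ (the empty sum $0$ among them) by $\#$, by $\gamma\mapsto\omega^\gamma$, and by $\vartheta$, a single structural induction reaches every $\alpha^*$. Throughout one carries along the least level $n_0$ at which $\alpha^*\in M_{n_0}$, which by unwinding the definition of $M_n$ is controlled by $\mathbb{G}(\alpha^*)$ together with the requirement that the normalizations of the members of $K^{<0}\alpha^*$ already lie in $\bigcup_{i<n_0}\mathrm{Acc}_i$, and one establishes $\alpha^*\in\mathrm{Acc}_n$ for every admissible $n\geq n_0$.

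First I would handle the easy constructors. If $\alpha=\Omega^{(J)}$ then $\alpha^*=\Omega^{(0)}$, the $<$-least element of every $M_n$ with $n\geq 0$, and so $\alpha^*\in\mathrm{Acc}_n$ at once; the empty sum is handled identically inside $M_{-\infty}$. If $\alpha=\omega^\beta$, then $\overline{\mathbb{FC}}(\alpha)=\overline{\mathbb{FC}}(\beta)$, so $\alpha^*=\omega^{\beta^*}$, and the inductive hypothesis for $\beta$ together with the lemma $\gamma\in\mathrm{Acc}_n\Rightarrow\omega^\gamma\in\mathrm{Acc}_n$ closes the case. If $\alpha=\#\{\alpha_i\}$, put $d=\overline{\mathbb{FC}}(\alpha)=\max_i\overline{\mathbb{FC}}(\alpha_i)$; then $\alpha^*$ is the commutative sum of the terms $(\alpha_i^*)^{\geq 0}_{-(d-\overline{\mathbb{FC}}(\alpha_i))}$, which one builds up one summand at a time, in order of increasing ground, using the closure-under-$\#$ lemma; the shift $\gamma^{\geq 0}_{-(n-m)}$ appearing in that lemma supplies precisely the re-grounding needed to move a summand from $\mathrm{Acc}_m$ up into $\mathrm{Acc}_n$.

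The substantive case is $\alpha=\vartheta\beta$; here the inductive hypothesis makes (a shift of) $\beta^*$ available in the relevant $\mathrm{Acc}$ set, and one applies one of the two $\vartheta$-lemmas according to whether the binder is \emph{vacuous} -- that is, whether $\beta$, after normalization and after accounting for its $K$-subterms, has no free $\Omega^{(0)}$ for $\vartheta$ to absorb. In the vacuous case -- which includes the ``countable'' case $\overline{\mathbb{FC}}(\vartheta\beta)=-\infty$, with the level unchanged -- the term $\vartheta\beta$ is, up to the normalizing shift, of the form $\vartheta(\gamma^{\leq 0}_{-1})$ with $\gamma^*=\beta^*$, so the lemma $\gamma\in\mathrm{Acc}_n\Rightarrow\vartheta(\gamma^{\leq 0}_{-1})\in\mathrm{Acc}_n$ applies and keeps $\vartheta\beta$ at the same level as $\beta$. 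In the non-vacuous case the $\vartheta$ genuinely collapses the topmost copy of $\Omega$, so $(\vartheta\beta)^*$ lives at a strictly lower $M$-level than $\beta^*$, and the last lemma carries $(\vartheta\beta)^*$ into that lower level -- and, inspecting its proof, into the least level admissible for $(\vartheta\beta)^*$.

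I expect the main obstacle to be the de Bruijn bookkeeping in this $\vartheta$ case. One cannot simply commute $(-)^*$ past $\vartheta$, because the binder absorbs the topmost copy of $\Omega$, so $(\vartheta\beta)^*$ has to be normalized relative to the copies of $\Omega$ that survive the binder; one then has to check that this is exactly the normalizing shift demanded by the $\cdot^{\leq J}$-relativized hypotheses of the two $\vartheta$-lemmas, and that their side conditions (no index collisions when indices are raised, and $\overline{\mathbb{FC}}^{\leq 0}$ landing in the admissible range) are met at each invocation. A secondary point is upgrading ``$\alpha^*\in\mathrm{Acc}_{n_0}$ at the least admissible level'' to ``$\alpha^*\in\mathrm{Acc}_n$ for all admissible $n\geq n_0$'': one wants that any $<$-descending chain through $M_n$ starting below $\alpha^*$ can, via the same normalization machinery, be relocated inside $M_{n_0}$, so that passing to a larger $n$ introduces no new infinite descents. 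This monotonicity is cleanest to thread through the same structural induction rather than to isolate beforehand.
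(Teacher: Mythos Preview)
Your proposal is correct and follows the same approach as the paper: a structural induction on terms, with each constructor handled by the corresponding closure lemma (the $\#$-lemma, the $\omega^\cdot$-lemma, and the two $\vartheta$-lemmas), and the base case $\Omega^{(J)}$ handled by minimality. The paper's proof is a single terse paragraph that does not spell out the de~Bruijn bookkeeping or the monotonicity-in-$n$ issue you raise, but the underlying argument is the one you describe, with the ``for all admissible $n$'' threaded through the induction hypothesis just as you suggest.
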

\begin{proof}
  By induction on terms. If $\alpha=\#\{\alpha_i\}$, $\alpha=\omega^{\alpha'}$, or $\alpha=\vartheta\alpha'$ then the claim follows from the inductive hypothesis and a lemma above. If $\alpha$ is $\Omega^{(0)}$, the claim holds because $\Omega^{(0)}$ is the smallest element of $M_n$.
\end{proof}

\subsection{Variables}\label{sec:poly_variables}

Finally, we should extend the system with a variable so that we can check the Key Lemma. We extend the notation system by variables $v^{(J)}$, where we interpret $v^{(J)}$ as a strongly critical ordinal term smaller than $\Omega^{(J)}$ and otherwise incomparable to strongly critical terms.

The definition of the substitution has to account for adjustments in levels creating by $\vartheta$.
\begin{definition}
  We define $\alpha[v\mapsto^{J}\beta]$ by induction on $\alpha$:
  \begin{itemize}
  \item $\#\{\alpha_i\}[v\mapsto^{J}\beta]=\#\{\alpha_i[v\mapsto^{J}\beta]\}$,
  \item $\omega^\alpha[v\mapsto^{J}\beta]=\omega^{\alpha[v\mapsto^{J}\beta]}$,
  \item $\Omega^{(J')}[v\mapsto^{J}\beta]=\Omega^{(J')}$,
  \item $(\vartheta\alpha)[v\mapsto^{J}\beta]=\vartheta(\alpha[v\mapsto^{J-1}\beta])$,
  \item $w^{(J')}[v\mapsto^{J}\beta]=
    \left\{\begin{array}{ll}
             \beta^{\leq 0}_{+J}&\text{if }v=w\\
             w^{(J')}&\text{if }v\neq w
           \end{array}\right.$.
       \end{itemize}
\end{definition}

We should only consider substituting into variables which appear precisely at the top level---that is, we want to substitute into things like $v^{(0)}\#\vartheta v^{(1)}$ but not $v^{(1)}\#\vartheta v^{(3)}$.
\begin{definition}
  $v$ is \emph{$J$-substitutable} in $\alpha$ if:
  \begin{itemize}
  \item $\alpha=\#\{\alpha_i\}$ and $v$ is $J$-substitutable in every $\alpha_i$,
  \item $\alpha=\omega^{\alpha'}$ and $v$ is $J$-substitutable in $\alpha'$,
  \item $\alpha=\Omega^{(J')}$,
  \item $\alpha=\vartheta\alpha'$ and $v$ is $J-1$-substitutable in $\alpha'$,
  \item $\alpha=w^{(J')}$ and either $v\neq w$ or $J=J'$.
  \end{itemize}
\end{definition}

\begin{definition}
  When $\overline{\mathbb{FC}}(\gamma)<0$, we define $D_{\gamma}(\beta)=D_{0,\gamma}(\beta)=\vartheta(\omega^{\Omega\#\beta}\#\gamma)$.  We define $D_{m+1,\gamma}(\beta)=D_{0,0}(D_{m,\gamma}(\beta))$.

  When $\overline{\mathbb{FC}}(\gamma)<0$, we define $\alpha\ll_\gamma\beta$ to hold if $\alpha<\beta$ and every $\eta\in K^{<0}\alpha$ is bounded by $D_{m,\gamma}(\beta)$ where $m$ is least so that $\overline{\mathbb{FC}}(D_{m,\gamma}(\beta))\leq\overline{\mathbb{FC}}(\eta)$.  
\end{definition}

\begin{lemma}[Key Lemma]\ 
  \begin{enumerate}
  \item If $\alpha<\beta$, $v$ is $0$-substitutable in $\alpha$ and $\beta$, and $\overline{\mathbb{FC}}(\gamma)<0$ then $\alpha[v\mapsto\gamma]<\beta[v\mapsto\gamma]$.
  \item If $\overline{\mathbb{FC}}(\delta)<0$ and $\alpha\ll_\delta\beta$ are ordinal terms so that every free variable is $J$-substitutable for some $J<0$ then $D_{\delta}(\alpha)\ll_0 D_\delta(\beta)$.
  \item If $\overline{\mathbb{FC}}(\delta)<0$ and $\alpha\ll_\delta\beta$, $\gamma\ll_\delta\beta$, and $v$ is $0$-substitutable in $\gamma$ then $D_{D_\delta(\alpha)^{\leq 0}_{-1}}(\gamma[v\mapsto^0 D_\delta(\alpha)^{\leq 0}_{-1}])\ll_0 D_\delta(\beta)$.
  \end{enumerate}
\end{lemma}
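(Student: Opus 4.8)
The plan is to follow the three-part pattern of the Key Lemma of Section~2, with the extra work going into tracking the de~Bruijn shifts $\cdot^{\leq J}_{\pm n}$. I would first isolate a commutation lemma, proved by a routine induction on terms: when $v$ is $J'$-substitutable in $\alpha$ and $\overline{\mathbb{FC}}(\gamma)<0$, the set $K^{<J}(\alpha[v\mapsto^{J'}\gamma])$ is obtained from $K^{<J}\alpha$ by applying the induced shift and substitution to each element, together with finitely many further terms pulled from the $K^{<\cdot}$'s of $\gamma$; the analogous statement holds with a shift in place of the substitution. Because $\overline{\mathbb{FC}}(\gamma)<0$ in every application, these extra terms all have negative formal cardinality and are automatically dominated by any term of formal cardinality $0$, which is what keeps the comparison clauses stable under these operations.

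For (1) I would argue by simultaneous structural induction on $\alpha$ and $\beta$. The cases $\#\{\alpha_i\}$, $\omega^{\alpha'}$ and $\Omega^{(J')}$ are immediate, as $\Omega^{(J')}$ is fixed by the substitution. For a variable $w^{(J')}$: if $w\neq v$ it is fixed, and if $w=v$ then $0$-substitutability forces $J'=-k$ with $k$ the number of enclosing applications of $\vartheta$, so $w^{(J')}$ is replaced by $\gamma^{\leq 0}_{-k}$, which is well defined since $\overline{\mathbb{FC}}(\gamma)<0$ and still lies below $\Omega^{(-k)}$, preserving the inequalities it enters. The only substantial case is $\alpha=\vartheta\alpha'$, $\beta=\vartheta\beta'$: expand the clause defining $\vartheta\alpha<\vartheta\beta$, apply the inductive hypothesis to $\alpha'$ versus $\beta'$ under the decremented substitution $[v\mapsto^{-1}\gamma]$, and use the commutation lemma to see that each $\gamma_0\in K^{<0}\alpha'$ is carried to a term still below $(\vartheta\beta')[v\mapsto^{0}\gamma]$.

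For (2) I would unfold $D_\delta(\xi)=\vartheta(\omega^{\Omega^{(0)}\#\xi}\#\delta)$. From $\alpha<\beta$ one gets $\omega^{\Omega^{(0)}\#\alpha}\#\delta<\omega^{\Omega^{(0)}\#\beta}\#\delta$; since the displayed $\Omega^{(0)}$ is invisible to $K^{<0}$, one has $K^{<0}(\omega^{\Omega^{(0)}\#\alpha}\#\delta)=K^{<0}\alpha\cup K^{<0}\delta$, and $\alpha\ll_\delta\beta$ bounds the members of $K^{<0}\alpha$ by the appropriate $D_{m,\delta}(\beta)$ while those of $K^{<0}\delta$ have negative formal cardinality, so the clause for comparing two $\vartheta$-terms gives $D_\delta(\alpha)<D_\delta(\beta)$. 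The $\ll_0$ requirement is then obtained by re-examining $K^{<0}D_\delta(\alpha)$, which again reduces to $K^{<0}\alpha\cup K^{<0}\delta$, and checking via $D_{m+1,0}=D_{0,0}\circ D_{m,0}$ and the recursion defining the $D_{m,\gamma}$ that each element is bounded by $D_{m,0}(D_\delta(\beta))$ for the $m$ picked out by the formal-cardinality condition in the definition of $\ll_0$.

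For (3), set $X=D_\delta(\alpha)^{\leq 0}_{-1}$ and $\gamma'=\gamma[v\mapsto^{0}X]$; since $\overline{\mathbb{FC}}(X)\leq-1<0$, $D_X(\gamma')$ is defined. By (1) (using that $v$ is not free in $\beta$) together with $\gamma<\beta$ one gets $\gamma'<\beta$, hence $\omega^{\Omega^{(0)}\#\gamma'}\#X<\omega^{\Omega^{(0)}\#\beta}\#\delta$, the summand $X$ being dominated by $\omega^{\Omega^{(0)}\#\beta}$ because $\overline{\mathbb{FC}}(X)<0=\overline{\mathbb{FC}}(\omega^{\Omega^{(0)}\#\beta})$. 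By the commutation lemma $K^{<0}(\omega^{\Omega^{(0)}\#\gamma'}\#X)$ lies in the shifted and substituted image of $K^{<0}\gamma$ together with $K^{<0}\delta$ and $K^{<0}X$, and $K^{<0}X$ is a shift of $K^{<0}D_\delta(\alpha)$, which (2) controls by $D_\delta(\beta)$, while $\gamma\ll_\delta\beta$ controls the rest; hence $D_X(\gamma')<D_\delta(\beta)$, with the $\ll_0$ clause finished as in (2). I expect the main obstacle to be exactly this shift bookkeeping: getting the commutation lemma right, and then in (2) and (3) matching the level at which each iterated collapse $D_{m,\gamma}$ must be read against the formal cardinality of the critical subterm it is supposed to bound. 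Once that is settled, the underlying combinatorics is that of the Section~2 Key Lemma.
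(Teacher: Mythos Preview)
Your proposal is essentially correct and follows the same route as the paper's own proof, which is extremely terse (part (1) is ``By induction on $\alpha,\beta$, following the comparison\ldots'', part (2) is ``Almost immediate from the definition'', and part (3) is a two-line containment check). Your explicit commutation lemma for $K^{<J}$ under substitution and shifting is precisely the bookkeeping the paper leaves implicit, and isolating it is a sensible way to make the argument honest.

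One small correction in your handling of part (3): $K^{<0}X$ is not ``a shift of $K^{<0}D_\delta(\alpha)$'' but simply the singleton $\{D_\delta(\alpha)\}$. Since $X=D_\delta(\alpha)^{\leq 0}_{-1}$ is a $\vartheta$-term with $\overline{\mathbb{FC}}^{\leq 0}(X)<0$, the clause for $K^{<0}\vartheta(\cdot)$ does not recurse inside but returns the whole term shifted back up, i.e.\ $X^{\leq 0}_{+1}=D_\delta(\alpha)$. This is exactly the contribution $\{D_\delta(\alpha)\}$ appearing in the paper's containment $K^{<0}(\omega^{\Omega\#\gamma'}\#X)\subseteq K^{<0}\gamma\cup K^{<0}\delta\cup\{D_\delta(\alpha)\}$, and it slightly simplifies your bound-checking since you need only compare $D_\delta(\alpha)$ itself against $D_\delta(\beta)$, which is part (2).
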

\begin{proof}
  \begin{enumerate}
  \item  By induction on $\alpha,\beta$, following the comparison between $\alpha$ and $\beta$. Any step involving $v$ is preserved when we replace $v$ with $\gamma$.

  \item  Almost immediate from the definition.
    
  \item  Since $\gamma<\beta$, we also have $\gamma[v\mapsto^0 D_\delta(\alpha)^{\leq 0}_{-1}]<\beta$ by (1), so $\omega^{\Omega\#\gamma[v\mapsto^0 D_\delta(\alpha)^{\leq 0}_{-1}]}\#D_\delta(\alpha)^{\leq 0}_{-1}<\omega^{\Omega\#\beta}\#\delta$. We have $K^{<0}(\omega^{\Omega\#\gamma[v\mapsto^0 D_\delta(\alpha)^{\leq 0}_{-1}]}\#D_\delta(\alpha)^{\leq 0}_{-1})\subseteq K^{<0}\gamma\cup K^{<0}\delta\cup \{D_\delta(\alpha)\}$, and so is bounded by $D_{m,\delta}(\beta)$ for the appropriate $m$.
  \end{enumerate}
\end{proof}

\subsection{The Path Not Taken}\label{sec:not_taken}

Before proceeding, we briefly discuss an alternative route one might take to develop the syntax of $\mathrm{OT}^{poly}_{\Omega_\omega}$, since it sometimes provides useful intuition.

In the alternate version, we avoid indices, having only a single cardinal term $\Omega$. We instead include variables from the beginning, and we adopt the rule that if $\alpha$ is an ordinal with free variables $v_1,\ldots,v_n$ then $\vartheta\alpha\beta_1\cdots\beta_n$ is an ordinal. The interpretation we have in mind is that this term represents $(\vartheta\alpha)[v_i\mapsto\beta_i]$.

Since we don't actually carry out the substitutions, we no longer need to worry about variable clashes---in the term $({\color{blue}\vartheta}{\color{blue}\Omega}\#v)(\Omega)$, there is no danger of thinking the black $\Omega$ is bound, since it is not even inside ${\color{blue}\vartheta}$.

We lose uniqueness, since we should have, for instance, $(\vartheta v)(\alpha\#\omega^\beta)=(\vartheta v\#\omega^w)(\alpha,\beta)$, but could easily recover this by adding some appropriate normal form.

If we followed through on this, we would run into the following obstacle---in order to compare $\vartheta\alpha\beta_1\cdots\beta_n$ with $\vartheta\alpha'\beta'_1\cdots\beta'_{n'}$, we would like to compare their \emph{uncollapsed} versions. But the uncollapsed version of $\vartheta\alpha\beta_1\cdots\beta_n$ cannot be $\alpha[v_i\mapsto\beta_i]$---this loses the distinction between $\Omega$'s at different levels. Rather, we want $\alpha$ to be something like $\lambda v_1\cdots v_n. \alpha$. But this means, for instance, that $(\lambda v.v)\Omega$ is fundamentally different that $\Omega$ (because the former collapses to something like $(\vartheta v)\Omega$ while the latter collapses to $\vartheta\Omega$). We could accept such issues, but it seems that the solution would end up replicating the counting and shifting we had to deal with above instead of rescuing us from it.

\section{Functions as Cardinals}
\subsection{Ordinal Terms}

We next describe a system introducing our new device, replacing $\Omega$ with a cardinal-like term $\Xi$, but $\Xi$ will have a \emph{function sort}---that is, we will have new terms of the form $\Xi(\alpha)$. Like $\Omega$, we will interpret $\Xi$ polymorphically, including expressions $\Xi^{(J)}(\alpha)$ for $J$ a natural number. Some device like polymorphism is necessary here---we will need to consider terms like $(v(\Xi(0)))[v\mapsto \vartheta w]$, which needs a way to ``protect'' the $\Xi(0)$ in the argument of $v$ from being collapsed when it is substituted into $\vartheta w$. As we will see, $\Xi$'s behavior as a function will lead us to a different definition of $K_\Xi$. Relatedly, it will be convenient to include some variables from the beginning.

\begin{definition}
  We define the ordinal terms $\mathrm{OT}_{\Xi}$ together with a distinguished subset $\mathbb{H}$ by:
  \begin{itemize}
  \item if $\{\alpha_0,\ldots,\alpha_{n-1}\}$ is a finite multi-set of elements of $\mathrm{OT}_{\Xi}$ in $\mathbb{H}$ and $n\neq 1$ then $\#\{\alpha_0,\ldots,\alpha_{n-1}\}$ is in $\mathrm{OT}_{\Xi}$,
  \item if $\alpha$ is in $\mathrm{OT}_{\Xi}$ then $\omega^\alpha$ is in $\mathbb{H}$,
  \item for each natural number $J$ and each $\alpha$ in $\mathrm{OT}_{\Xi}$, $\Xi^{(J)}(\alpha)$ is in $\mathbb{H}$,
  \item for any $\alpha$ in $\mathrm{OT}_{\Xi}$, $\vartheta\alpha$ is in $\mathbb{H}$.
  \item there are an infinite set of variables $V$ and, for each $v\in V$ and natural number $J$, $v^{(J)}$ is in $\mathrm{OT}_\Xi$.
  \end{itemize}

  We define $\mathbb{SC}$ to be all ordinal terms in $\mathbb{H}$ \emph{not} of the form $\omega^\alpha$.
\end{definition}

Our formal cardinalities are the same as in the previous section.

We can now define operations like shifting and substitution similar to the way we did above. There is one subtlety, which is that in $\Xi^{(J)}(\alpha)$, we wish everything in $\alpha$ to have cardinality at most\footnote{Explicitly, we need to prohibit something like ${\color{blue}\vartheta}\Xi({\color{blue}\Xi}(0))$, where the inner $\Xi$ is bound while the outer one is free. To help motivate why this expression is incorrect, consider the alternate syntax where we replace polymorphism with $\lambda$ expressions. We should be able to write this in the form $(\lambda v. s(v))(t)$ for some terms $s$ and $t$; as long as $t$ is an ordinal, we cannot do this.} that of $\Xi$. To enforce this syntactically, we take $\Xi^{(J)}(\Xi^{(0)}(0))$ to mean that the inner $\Xi^{(0)}$ is \emph{at the same level} as the outer one.

\begin{definition}
  We define $\alpha^{\leq J}_{\pm J'}$ by:
  \begin{itemize}
  \item $\#\{\alpha_i\}^{\leq J}_{\pm J'}=\#\{(\alpha_i) ^{\leq J}_{\pm J'}\}$,
  \item $(\omega^\alpha) ^{\leq J}_{\pm J'}=\omega^{\alpha^{\leq J}_{\pm J'}}$,
  \item $(\Xi^{(J'')}(\alpha))^{\leq J}_{\pm J'}=    \left\{\begin{array}{ll}
             \Xi^{(J''+J')}(\alpha)&\text{if }J''\leq J\\
             \Xi^{(J'')}(\alpha^{\leq J-J''}_{\pm J'})&\text{if }J''>J\\
           \end{array}\right.$,
  \item $(\vartheta\alpha)^{\leq J}_{\pm J'}=\vartheta(\alpha^{\leq J}_{\pm J'})$,
  \item $(v^{(J'')})^{\leq J}_{\pm J',\kappa'}=    \left\{\begin{array}{ll}
             v^{(J''+J')}&\text{if }J''\leq J\\
             v^{(J'')}&\text{if }J''> J\\
           \end{array}\right.$.
  \end{itemize}
\end{definition}

Note that, for this purpose, $v^{(J)}$ sits in the same cardinality as $\Xi^{(J)}$. (It will lie at the very bottom of this cardinality, so it will in other places behave like cardinality $J-1$ instead.)

Next we can define how substituting for the variables $v^{(J)}$ works.
\begin{definition}
  We define $\alpha[v\mapsto^{J}\beta]$ by induction on $\alpha$:
  \begin{itemize}
  \item $\#\{\alpha_i\}[v\mapsto^{J}\beta]=\#\{\alpha_i[v\mapsto^{J}\beta]\}$,
  \item $\omega^\alpha[v\mapsto^{J}\beta]=\omega^{\alpha[v\mapsto^{J}\beta]}$,
  \item $\Xi^{(J')}(\alpha)[v\mapsto^{J}\beta]=\Xi^{(J')}(\alpha[v\mapsto^{J-J'}\beta])$,
       \item $(\vartheta\alpha)[v\mapsto^{J}\beta]=\vartheta(\alpha[v\mapsto^{J-1}]\beta)$,
  \item $w^{(J')}[v\mapsto^{J}\beta]=\left\{\begin{array}{ll}
                                                      \beta^{\leq 0}_{+J}&\text{if }v=w\\
                                                      w&\text{otherwise}
                                                    \end{array}\right.$.
  \end{itemize}

\end{definition}

As before, we only want to substitute into $v$ when it is at the right level at the moment of substitution.

\begin{definition}
  We define when $v$ is $J$-substitutable in $\alpha$ by:
  \begin{itemize}
  \item $v$ is $J$-substitutable in $\#\{\alpha_i\}$ if $v$ is $J$-substitutable in each $\alpha_i$,
  \item $v$ is $J$-substitutable in $\omega^\alpha$ if $v$ is $J$-substitutable in $\alpha$,
  \item $v$ is $J$-substitutable in $\Xi^{(J')}(\alpha)$ if either $v$ does not appear in $\alpha$, or $J'>J'$ and $v$ is $J-J'$ substitutable in $\alpha$,
  \item $v$ is $J$-substitutable in $(\vartheta\alpha)$ if $v$ is $J-1$-substitutable in $\alpha$,
  \item $v$ is $J$-substitutable in $w^{(J')}$ if $v\neq w$ or $J=J'$.
  \end{itemize}
\end{definition}

We next collect up the cardinalities present in an ordinal term. %

\begin{definition}
   We define $\mathbb{FC}^{\leq J}(\alpha)$ inductively by:
  \begin{itemize}
  \item $\mathbb{FC}^{\leq J}(\#\{\alpha_i\})=\bigcup_i\mathbb{FC}^{\leq J}(\alpha_i)$,
  \item $\mathbb{FC}^{\leq J}(\omega^\alpha)=\mathbb{FC}^{\leq J}(\alpha)$,
  \item $\mathbb{FC}^{\leq J}(\Xi^{(J')}(\alpha))=\left\{\begin{array}{ll}
                                                      \{J'-J\}\cup \{J''+(J'-J)\mid J''\in \mathbb{FC}^{\leq 0}(\alpha)\}&\text{if }J'\leq J\\
                                                      \mathbb{FC}^{\leq J-J'}(\alpha)&\text{if }J'>J\\
                                                    \end{array}\right.$,
  \item $\mathbb{FC}^{\leq J}(\vartheta\alpha)=\mathbb{FC}^{\leq J-1}\alpha$,
  \item $\mathbb{FC}^{\leq J}(v^{(J')})=\left\{\begin{array}{ll}
                                                      J'-1&\text{if }J'\leq J\\
                                                      \emptyset&\text{if }J'>J\\
                                                    \end{array}\right.$.
  \end{itemize}

  We define $\overline{\mathbb{FC}}^{\leq J}(\alpha)=\sup\mathbb{FC}^{\leq J}(\alpha)$ (where $\sup\emptyset=-\infty$).
\end{definition}

The definition above takes $v^{(J)}$ to sit between the formal cardinality $(J,\Xi)$ and the next lower formal cardinality $(J+1,\Omega)$.

\begin{definition}
When $\alpha$ is an ordinal term, a \emph{parameter} of $\alpha$ is an ordinal term $\beta$ in $\mathbb{SC}$ so that there is some $\alpha'(v)$ with a fresh $0$-substitutable variable $v$ so that with $\alpha=\alpha'[v\mapsto^0\beta]$.

  The \emph{canonical abstraction} of $\alpha$ is the expresion $\alpha=\bar\alpha[v_1\mapsto^0\beta_1,\ldots,v_n\mapsto^0\beta_n]$ such that $\overline{\mathbb{FC}}^{\geq 0}(\bar\alpha)<0$ and each $\beta_i$ has the form $\Xi^{(0)}(\beta'_0)$.
\end{definition}
That is, the canonical abstraction is the expression we get by pulling out precisely the parameters which are of the form $\Xi^{(0)}(\beta'_0)$.
  
\begin{definition}
The definition of $K^{< J}\alpha$ contains a crucial difference:
  \begin{itemize}
  \item $K^{< J}\#\{\alpha_i\}=\bigcup_i K^{< J}\alpha_i$,
  \item $K^{< J}\omega^\alpha=K^{< J}\alpha$,
  \item $K^{< J}\Xi^{(J')}(\alpha)=\left\{\begin{array}{ll}
                                          \{\Xi^{J'-(J-1)}(\alpha)\}&\text{if }J'<J\\
                                          K^{\leq J-J'}\alpha&\text{if }J'\geq J
                                        \end{array}\right.$,
  \item $K^{< J}\vartheta\alpha=
    \left\{\begin{array}{ll}
             \left(\overline{(\vartheta\alpha)^{\leq 0}_{-J'}}\right)^{\leq 0}_{+1}&\text{if }\overline{\mathbb{FC}}^{\leq 0}(\vartheta\alpha)\leq J\\
             K^{\leq J,\kappa+\Xi}\alpha&\text{if }\overline{\mathbb{FC}}^{\leq 0}(\vartheta\alpha)>J
           \end{array}\right.$,
       \item $K^{< J} v^{(J')}=\left\{\begin{array}{ll}\{v^{(J-(J-1))}\}&\text{if }J'<J\\
                                                         \emptyset&\text{if }J'\geq J\end{array}\right.$.
  \end{itemize}
\end{definition}

In the definition of $K\vartheta\alpha$, where we include the case where $\overline{FC}^{\leq 0}(\vartheta\alpha)=J$. In this case, we think of $\vartheta\alpha$ as the result of applying a function to some $\Xi^{(0)}(\beta_i)$ term, and we place this \emph{function} in $K^{< J}$.

For instance, consider the ordinal $\vartheta\alpha={\color{red}\vartheta}{\color{blue}\vartheta}{\color{red}\Xi}^{(1)}(0)$. When we compare this to other ordinals, we will look at the set $K^{< 0}\alpha=\{\vartheta v\}$. That is, this ordinal recognizes that the ordinal $\alpha$ includes applying the \emph{function} $\vartheta v$ to $\Xi^{(0)}$; in our definition of $<$ below, we will reflect this by arranging that $\vartheta\alpha$ be closed under the function $\vartheta v$, in the sense that, for any $\beta<\vartheta\alpha$, $(\vartheta v)[v\mapsto\beta]<\vartheta\alpha$. (Note that $\vartheta\alpha$ itself is \emph{not} of this form, because the definition of substitution prevents anything in $\beta$ from getting bound during the substitution.)

This perhaps looks more conventional in the alternative notation---we could think of this ordinal as $\vartheta\left[(\lambda v. \vartheta v)\Xi(0)\right]$, which makes identifying $\lambda v. \vartheta v$ as an element of $K^{<0}\alpha$ a natural interpretation of the $K$ operator in this context. 

\begin{definition}
  We define the ordering $\alpha<\beta$:
  \begin{itemize}
\item $\#\{\alpha_i\}<\#\{\beta_j\}$ if there is some $\beta_{j_0}\in\{\beta_j\}\setminus\{\alpha_i\}$ such that, for all $\alpha_i\in\{\alpha_i\}\setminus\{\beta_j\}$, $\alpha_i<\beta_{j_0}$,
\item if $\beta\in\mathbb{H}$ then:
  \begin{itemize}
  \item $\beta<\#\{\alpha_i\}$ if there is some $i$ with $\beta\leq\alpha_i$,
  \item $\#\{\alpha_i\}<\beta$ if, for all $i$, $\alpha_i<\beta$,
  \end{itemize}
\item $\omega^\alpha<\omega^\beta$ if $\alpha<\beta$,
\item if $\beta\in\mathbb{SC}$ then:
  \begin{itemize}
  \item $\beta<\omega^\alpha$ if $\beta<\alpha$,
  \item $\omega^\alpha<\beta$ if $\alpha\leq\beta$,
  \end{itemize}
\item $\Xi^{(J)}(\alpha)<\Xi^{(J')}(\beta)$ if $J<J'$ or $J=J'$ and $\alpha<\beta$,
\item $\Xi^{(J)}(\alpha)<\vartheta\beta$ if there is a $\gamma\in K^{<0}\alpha$ with $\Xi^{(J)}(\alpha)\leq\gamma[v\mapsto^00]$,
\item $\vartheta\alpha<\vartheta\beta$ if:
  \begin{itemize}
  \item $\alpha<\beta$ and for any parameter $\beta'$ of $\beta$ and any $\gamma\in K^{< 0}\alpha$, $\gamma[v\mapsto^{0}\beta']<\vartheta\beta$,
  \item $\beta<\alpha$ and there is a parameter $\alpha'$ of $\alpha$ and a $\gamma\in K^{<0}\beta$ so that $\vartheta\alpha\leq\gamma[v\mapsto^0\alpha']$.
  \end{itemize}
\end{itemize}
\end{definition}

One might worry about the inductive definition involved in comparisons like $\gamma[v\mapsto^{0}\beta']<\vartheta\beta$. The following perspective illustrates why this is justified, as well as why it yields the strong fixed point property of $\vartheta\beta$ that we need.

Evaluate the inductive definition of $<$ between $\gamma$, with the variable, and $\vartheta\beta$. This is well-defined unless we need to compare a variable in $\gamma$ with a strongly critical subterm of $\vartheta\beta$. However, since could substitute $v$ with \emph{any} strongly critical subterm of $\vartheta\beta$, so we may assume $v$ is the largest strongly critical subterm of $\vartheta\beta$. In this way, we can complete the inductive comparison of $\gamma$ to $\vartheta\beta$.

Similarly, if $\gamma[v\mapsto^{0}\beta']<\vartheta\beta$ for all strongly critical subterms of $\vartheta\beta$, it follows that $\gamma[v\mapsto^{0}\delta]<\vartheta\beta$ for all $\delta<\vartheta\beta$, since replacing $v$ with such a $\delta$ does not change the result of the comparison.

\subsection{Well-Foundedness}

The basic structure of the proof is similar. We will classify ordinals based on their largest cardinal term---that is, given an ordinal $\alpha$ with $\overline{\mathbb{FC}}^{\leq 0}(\alpha)>-\infty$, we will shift it over so that $\overline{\mathbb{FC}}^{\leq 0}(\alpha^{*})=0$. We will further identify the largest $\beta$ so that $\Xi^{(0)}(\beta)$ appears $\alpha^*$.

\begin{definition}
  The \emph{fine cardinality} of $\alpha$ with $\overline{\mathbb{FC}}^{\leq 0}(\alpha)\in\{-\infty,0\}$ is $-\infty$ if $\overline{\mathbb{FC}}^{\leq 0}(\alpha)=-\infty$ and is $\beta$, where $\beta$ is maximal so that $\alpha=\alpha'[v\mapsto^0\Xi^{(0)}(\beta)]$ for some $\alpha'$ with $v$ appearing in $\alpha'$, if $\overline{\mathbb{FC}}^{\leq 0}(\alpha)=0$. We write $\kappa(\alpha)$ for the fine cardinality of $\alpha$.
\end{definition}

\begin{definition}
  We define a sequence of sets inductively:
  \begin{itemize}
  \item $M_{-\infty}$ is all ordinal terms with formal cardinality $-\infty$,
  \item given $M_c$, we let $\mathrm{Acc}_c$ be the well-founded part of $M_c$,
  \item for any $c=(n,\beta)$ so that $\beta=\beta^*\in\mathrm{Acc}_{(-\mathbb{G}(\beta^*),\kappa(\beta^*))}$, we let $M_c$ be the set of ordinal terms $\alpha$ with $\overline{FC}^{\leq 0}(\alpha)=0$, $\mathbb{G}(\alpha)\geq -n$, $\kappa(\alpha)=\beta$, and $\{\gamma^*\mid \gamma\in K^{<0}\alpha\}\subseteq\bigcup_{c'<c}\mathrm{Acc}_{c'}$
  \end{itemize}
  where $-\infty<(n,\beta)$ for all $n,\beta$ and $(n,\beta)<(n',\beta')$ if $n<n'$ or $n=n'$ and $\beta<\beta'$.
\end{definition}

As before, we have:
\begin{lemma}
  \begin{itemize}
  \item Whenever $c\leq c'$, $\alpha\in\mathrm{Acc}_c$, $\beta\in\mathrm{Acc}_{c'}$, we have $\alpha^{\geq 0}_{-k}\#\beta\in\mathrm{Acc}_{c'}$ where, if $c=(m,\alpha')$ and $c'=(n,\beta')$ then $k=n-m$ (and if $c$ does not have this form then $c=-\infty$ and $k$ does not matter).
  \item If $\alpha\in\mathrm{Acc}_c$ then $\omega^\alpha\in\mathrm{Acc}_c$.
  \item   If $\alpha\in\mathrm{Acc}_c$ then $\{(\vartheta\alpha)^*,(\vartheta(\alpha^{\leq 0}_{-1}))\}\subseteq\bigcup_{c'}\mathrm{Acc}_{c'}$.
  \end{itemize}
\end{lemma}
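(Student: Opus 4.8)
The plan is to establish the three clauses in the order in which they are stated, each by induction on the admissible indices --- pairs $(n,\beta)$ ordered lexicographically, with $-\infty$ at the bottom --- exactly paralleling the four lemmas that precede Theorem~\ref{thm:ordinals_wf_idOmega} in the $\mathrm{OT}^{poly}_{\Omega_\omega}$ setting (clause (3) combines the analogues of the last two of those, so that clauses (1) and (2) are already available when it is proved). In each case the schema is the same: one first checks from the definitions of $\overline{\mathbb{FC}}^{\leq 0}$, $\mathbb{G}$, $\kappa$ and $K^{<0}$ that the term in question --- $\alpha^{\geq 0}_{-k}\#\beta$, or $\omega^\alpha$, or one of $(\vartheta\alpha)^*$, $\vartheta(\alpha^{\leq 0}_{-1})$ --- lies in the appropriate $M_{c'}$, using the outer induction to see that the $*$-forms of the members of $K^{<0}$ of that term are accessible at strictly smaller indices; and then one shows that every $\gamma<\tau$ with $\gamma\in M_{c'}$ is accessible, by a nested structural induction on $\gamma$ that follows whichever clause of the definition of $<$ witnesses $\gamma<\tau$.

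For clause (1) I would induct on $c'$, then on $\alpha\in\mathrm{Acc}_c$, then on $\beta\in\mathrm{Acc}_{c'}$, just as in the corresponding $\Omega$-lemma. Given $\gamma<\alpha^{\geq0}_{-k}\#\beta$ in $M_{c'}$: if $\gamma\in\mathbb{H}$ then $\gamma\leq\alpha^{\geq0}_{-k}$ or $\gamma\leq\beta$, and in the first case $\overline{\mathbb{FC}}^{\leq0}(\gamma)=0$ forces $k=0$, so $\gamma\in\mathrm{Acc}_{c'}$ outright; if $\gamma=\#\{\gamma_i\}$, split off the ``large part'' $\gamma'=\#\{\gamma_i\mid\alpha^{\geq0}_{-k}\leq\gamma_i\}$, observe $\gamma'\leq\beta$ (or else $\gamma'$ has strictly smaller $\overline{\mathbb{FC}}^{\leq0}$ than $\gamma$, again forcing $k=0$), and reinsert the small components one at a time using the $\beta$-induction. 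Clause (2) is the literal analogue of its $\Omega$-counterpart: induct on $c$ then $\alpha$, and for $\beta<\omega^\alpha$ in $M_c$ run a structural induction on $\beta$, handling $\beta=\#\{\beta_i\}$ by clause (1), $\beta=\omega^{\beta'}$ by the inner induction, and $\beta=\vartheta\beta'$ via $\vartheta\beta'\leq\alpha$.

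Clause (3) carries the new content. As before I would first reduce to $\alpha$ whose formal-cardinality profile is in normal form, so that the ``$\{0,-1\}\subseteq\mathbb{FC}(\alpha)$ or $\mathbb{FC}(\alpha)=\{0\}$'' dichotomy applies, and dispatch the sub-case $\overline{\mathbb{FC}}^{\leq0}(\alpha)<0$ --- the ``$\vartheta(\alpha^{\leq 0}_{-1})$'' part, where no top cardinal is collapsed and the term stays at essentially the same index --- by the pattern of the third $\Omega$-lemma. For the main sub-case I induct on $\alpha\in\mathrm{Acc}_c$, put $c'=(-\mathbb{G}((\vartheta\alpha)^*),\kappa((\vartheta\alpha)^*))$, and show each $\gamma<(\vartheta\alpha)^*$ in $M_{c'}$ is accessible by structural induction on $\gamma$; the decisive case is $\gamma=\vartheta\gamma'$. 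Un-shifting so as to compare $\vartheta\gamma''$ with $\vartheta\alpha$, the first comparison clause gives $\gamma''<\alpha$ together with the side bound that $\delta[v\mapsto^0\alpha']<\vartheta\alpha$ for every parameter $\alpha'$ of $\alpha$ and every $\delta\in K^{<0}\gamma''$; from this I want to conclude $\gamma''\in M_{c''}$ for the appropriate $c''\leq c$ and apply the inductive hypothesis to obtain $(\vartheta\gamma'')^*\in\bigcup_{c'''}\mathrm{Acc}_{c'''}$, hence $\gamma\in\mathrm{Acc}_{c'}$. The case $\alpha<\gamma''$ is handled as in the $\Omega$-proof, by producing $\vartheta\gamma''\leq\delta[v\mapsto^0\alpha']$ for some $\delta\in K^{<0}\alpha$ and $\alpha'$ a parameter of $\gamma''$, and noting $\delta^*\in\mathrm{Acc}$ because $\alpha\in M_c$.

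The main obstacle I expect is precisely the step ``$\gamma''\in M_{c''}$'' in that decisive case, which has two genuinely new facets relative to the $\Omega$-setting. First, the fine-cardinality bookkeeping: one must track how $\kappa$ and $\mathbb{G}$ behave under the shifts $\cdot^{\leq 0}_{\pm 1}$ and under the parameter substitutions $[v\mapsto^0\Xi^{(0)}(\beta')]$ appearing in the definition of $<$, and verify that the index $c''$ one lands in really is $\leq c$ (so that some inductive hypothesis applies) and really is the index at which $\vartheta\gamma''$ is being tested --- this is where the lexicographic refinement of the index earns its keep. Second, the members of $K^{<0}\gamma''$ that are \emph{new} relative to $K^{<0}\gamma$ may be \emph{abstractions}, namely the terms $\bigl(\overline{(\vartheta\alpha)^{\leq 0}_{-J'}}\bigr)^{\leq 0}_{+1}$ produced by the new $K^{<0}\vartheta(\cdot)$ clause, which carry a free variable and hence are not literally subterms to which the structural induction applies; the resolution is the ``substitute the largest strongly critical subterm of $\vartheta\alpha$'' device the paper already isolates --- accessibility of such an abstraction is witnessed by accessibility of all its parameter-instantiations, each of which is $<\vartheta\alpha$ by the side bound and so is caught by the structural induction on $\gamma$. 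Assembling these, together with the strong fixed-point property of $\vartheta\alpha$ (its closure under the functions in $K^{<0}\alpha$), completes clause (3).
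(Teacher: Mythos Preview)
Your proposal is correct and takes essentially the same approach as the paper --- indeed, the paper gives no proof at all here, merely writing ``As before, we have:'' and deferring to the four lemmas preceding Theorem~\ref{thm:ordinals_wf_idOmega}; you have correctly reconstructed that deferral, matched the three clauses to those four lemmas (combining the last two into clause~(3)), and additionally identified the genuinely new bookkeeping required in the $\Xi$ setting (the fine-cardinality index $\kappa$ and the abstraction terms produced by the modified $K^{<0}\vartheta(\cdot)$ clause), which the paper leaves entirely implicit.
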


\subsection{Variables}

Finally, of course, we wish to verify the key lemma. We already have variables, but we need additional ones which are properly like the $\Xi$ terms---that is, we need variables which are functions.

So we extend our system by terms $V^{(J)}(\alpha)$ where $\alpha$ is an ordinal term and $V$ is our new variable and $V^{(J)}(\alpha)$ is below $\Xi^{(J)}(\alpha)$. As usual, we do not allow $V$ to appear inside $\vartheta$.

\begin{definition}
  When $\alpha$ are ordinal terms and $\beta$ is an ordinal term with a distinguished (ordinal) variable $w$ which is $0$-substitutable in $\beta$, we define $\alpha[V\mapsto^J\beta(w)]$ by induction on $\alpha$:
  \begin{itemize}
  \item $\#\{\alpha_i\}[V\mapsto^J\beta(w)]=\#\{\alpha_i[V\mapsto^J\beta(w)]\}$,
  \item $\omega^\alpha[V\mapsto^J\beta(w)]=\omega^{\alpha[V\mapsto^J\beta(w)]}$,
  \item $\Xi^{(J')}(\alpha)[V\mapsto^J\beta(w)]=\Xi^{(J')}([V\mapsto^{J-J'}\beta(w)])$,
  \item $(\vartheta\alpha)[V\mapsto^J\beta(w)]=\vartheta(\alpha[V\mapsto^{J-1}\beta(w)])$,
  \item $V^{(J')}(\alpha)[V\mapsto^J\beta(w)]=\beta(\alpha[V\mapsto^0\beta(w)])$.
  \end{itemize}

  We define when $V$ is \emph{$J$-substitutable} in $\alpha$ by:
  \begin{itemize}
  \item $V$ is $J$-substitutable in $\#\{\alpha_i\}$ if $V$ is $J$-substitutable in each $\alpha_i$,
  \item $V$ is $J$-substitutable in $\omega^\alpha$ if $V$ is $J$-substitutable in $\alpha$,
  \item $V$ is $J$-substitutable in $\Xi^{(J')}(\alpha)$ if $J\leq J'$ and $V$ is $J-J'$-substitutable in $\alpha$,
  \item $V$ is $J$-substitutable in $\vartheta\alpha$ if $V$ does not appear in $\vartheta\alpha$,%
  \item $V$ is $J$-substitutable in $V^{(J')}(\alpha)$ if $J=J'$ and $V$ is $0$-substitutable in $\alpha$.
  \end{itemize}
\end{definition}

\begin{definition}
  When $\overline{\mathbb{FC}}(\gamma)<0$, we define $D_\gamma(\beta)=D_{0,\gamma}(\beta)=\vartheta(\omega^{\Xi^{(0)}(1)\#\beta}\#\gamma(\Xi^{(0)}(0)))$ and $D_{m+1,\gamma}(\beta)=D_{0,0}(D_{m,\gamma}(\beta))$.

  When $\overline{\mathbb{FC}}(\gamma)<0$, we define $\alpha\ll_\gamma\beta$ to hold if $\alpha<\beta$ and every $\eta\in K^{<0}\alpha$ is bounded by $D_{m,\gamma}(\beta)$ where $m$ is least so that $\overline{\mathbb{FC}}(\gamma)\leq\overline{\mathbb{FC}}(\eta)$.
\end{definition}
We expect that $\eta$ has free variables; we understand this to be the comparison in which the free variables of $\eta$ are larger than any proper strongly critical subterm of $D_{m,\gamma}(\beta)$.

\begin{lemma}[Key Lemma for $\Xi$]
  \begin{enumerate}
  \item If $\alpha<\beta$, and $v$ is $0$-substitutable in $\gamma$ then $\gamma[v\mapsto^0\alpha]<\gamma[v\mapsto^0\beta]$.
  \item If $\overline{\mathbb{FC}}(\delta)<0$, $\alpha\ll_\delta\beta$, $V$ is $0$-substitutable in $\alpha$ and $\beta$, $\overline{\mathbb{FC}}(\gamma)<0$, and $w$ is $0$-substitutable in $\gamma$ then $\alpha[V\mapsto^{0}\gamma(w)]<_\Omega\beta[V\mapsto^{0}\gamma(w)]$.
  \item If $\overline{\mathbb{FC}}(\delta)<0$ and $\alpha\ll_\delta\beta$ are ordinal terms so that every free variable is $J$-substitutable for some $J<0$ then $D_\delta(\alpha)\ll_0 D_\delta(\beta)$.
  \item If $\overline{\mathbb{FC}}(\delta)<0$, $\alpha\ll_\delta\beta$, $\gamma\ll_\delta\beta$, $V$ is $0$-substitutable in $\gamma$, $V$ does not appear in $\beta$, $w$ is $0$-substitutable in $\alpha$, then $D_{0,D_{0,\delta}(\alpha)^{\leq 0}_{-1}}(\gamma[V\mapsto^0 D_{0,\delta}(\alpha)^{\leq 0}_{-1}])\ll_0D_{0,\delta}(\alpha)$.
  \end{enumerate}
\end{lemma}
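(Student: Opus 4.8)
The plan is to prove the four clauses in sequence, mirroring the polymorphic Key Lemma of Section~\ref{sec:poly_variables}: clauses (1) and (2) are substitution-monotonicity facts, clause (3) is monotonicity of the collapse $D$, and clause (4) is the strong fixed-point/closure statement that carries the real content. Throughout I follow the recursive structure of the operation in play and lean on the ordering clauses for $\vartheta$ together with the closure property of $\vartheta\beta$ recorded immediately after the definition of $<$. Clause (1) is a structural induction on $\gamma$ following the clauses of $[v\mapsto^0\cdot]$; the atomic case $\gamma=v^{(0)}$ returns $\alpha$ versus $\beta$ directly (when $v$ is absent the two sides coincide, so the statement is strict exactly when $v$ occurs). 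The only delicate case is $\gamma=\vartheta\gamma'$, where $v$ is substituted at level $-1$: the inserted value sits at strictly lower formal cardinality than the collapse level, so $K^{<0}$ is computed from the same high-cardinality skeleton in $\gamma'[v\mapsto^{-1}\alpha]$ and $\gamma'[v\mapsto^{-1}\beta]$, and the first sub-case of the $\vartheta<\vartheta$ ordering reduces the goal to the inductive hypothesis $\gamma'[v\mapsto^{-1}\alpha]<\gamma'[v\mapsto^{-1}\beta]$. Clause (2) is then a structural induction along the comparison witnessing $\alpha<\beta$ (the only input we use from $\alpha\ll_\delta\beta$ is $\alpha<\beta$): every step not touching $V$ is preserved verbatim, the mixed atomic comparisons go through because $\gamma(w)$ is placed below $\Xi^{(J')}$, and the one genuinely new step, comparing two applications of $V$, is handled by clause (1), since replacing $V$ by the fixed function $\gamma(w)$ turns it into monotonicity of $\gamma(w)$ in its argument.

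For clause (3), unfold $D_\delta(\alpha)=\vartheta(\omega^{\Xi^{(0)}(1)\#\alpha}\#\delta(\Xi^{(0)}(0)))$ and similarly for $\beta$. From $\alpha<\beta$ and clause (1) we get $\omega^{\Xi^{(0)}(1)\#\alpha}\#\delta(\Xi^{(0)}(0))<\omega^{\Xi^{(0)}(1)\#\beta}\#\delta(\Xi^{(0)}(0))$; since $K^{<0}$ of this argument is contained in $K^{<0}\alpha\cup K^{<0}\delta$ together with the function $\delta$, each element is $<D_{m,\delta}(\beta)$ for the appropriate $m$ by $\alpha\ll_\delta\beta$, so the first sub-case of the $\vartheta<\vartheta$ ordering yields $D_\delta(\alpha)<D_\delta(\beta)$. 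Computing $K^{<0}D_\delta(\alpha)\subseteq K^{<0}\alpha\cup K^{<0}\delta\cup\{\delta\}$ and bounding each term by the relevant $D_{m,0}(D_\delta(\beta))$ then gives $D_\delta(\alpha)\ll_0 D_\delta(\beta)$.

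Clause (4) is the main obstacle, and it is where the target bound is $D_{0,\delta}(\alpha)$ \emph{itself}. Write $A=D_{0,\delta}(\alpha)=\vartheta Q$ with $Q=\omega^{\Xi^{(0)}(1)\#\alpha}\#\delta(\Xi^{(0)}(0))$, put $A'=A^{\leq 0}_{-1}$, and $\gamma^\dagger=\gamma[V\mapsto^0 A']$, so the left-hand side is $\vartheta P$ with $P=\omega^{\Xi^{(0)}(1)\#\gamma^\dagger}\#A'$. The plan is to run the $\vartheta<\vartheta$ ordering against $A=\vartheta Q$ using two structural facts. First, $K^{<0}A'=\{A\}$: applying $K^{<0}$ to the level-$(-1)$ copy of the $\vartheta$-term $A$ shifts it back up to $A$, exactly as the summand $D_\delta(\alpha)^{\leq 0}_{-1}$ contributes $\{D_\delta(\alpha)\}$ in the polymorphic clause (3). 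Second, $A=\vartheta Q$ has the strong fixed-point property noted after the ordering definition: the function $\delta$ lies in $K^{<0}Q$, since its argument $\Xi^{(0)}(0)$ sits at the collapse level, so $A$ is closed under $\zeta\mapsto\delta[w\mapsto^0\zeta]$ for every $\zeta<A$. Using clauses (1)/(2) from $\gamma<\beta$ we have $\gamma^\dagger<\beta$, and $K^{<0}P\subseteq K^{<0}\gamma\cup K^{<0}\delta\cup\{A\}$; by $\gamma\ll_\delta\beta$ and $\alpha\ll_\delta\beta$ these critical subterms are all $<A$. Feeding this into the ordering, and invoking closure under $\delta$ to absorb the instantiated parameters, is meant to force $\vartheta P<A$, after which the same computation of $K^{<0}P$ (bounding each element by the appropriate $D_{m,0}(A)$) supplies the $\ll_0$ side-condition.

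The genuinely hard point is that the strict comparison must land below $A=D_{0,\delta}(\alpha)$ and \emph{not} below the larger collapse $D_{0,\delta}(\beta)$ that a naive monotonicity argument (as in clause (3)) would produce. Consequently one cannot simply compare the collapse arguments $P$ and $Q$ and appeal to $\alpha<\beta$, since $\gamma$ and $\alpha$ are only both bounded by $\beta$; instead the comparison must be carried through the closure of $A=\vartheta Q$ under the function $\delta\in K^{<0}Q$ together with the fact that the base point $A'=A^{\leq 0}_{-1}$ built into $P$ satisfies $K^{<0}A'=\{A\}$, which is what makes $A$ an available bound at all. Two further points demand care: the recursive nature of the $\vartheta<\vartheta$ clause, where one substitutes the largest strongly critical subterm of $A$ for the free variable (justified in the remark after the definition of $<$, which both secures well-definedness and delivers the full closure of $A$ under $\delta$); and the interaction of the constant substitution $V\mapsto^0 A'$ with the computation of $K^{<0}\gamma^\dagger$, where one must verify that no high-cardinality critical subterm of $\gamma^\dagger$ escapes the bound $A$. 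Pinning down precisely how the appearance of $A'$ forces the collapse below $A$, rather than merely below $D_{0,\delta}(\beta)$, is the crux of the whole lemma.
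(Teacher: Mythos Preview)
Your treatment of clauses (1)--(3) matches the paper's: clause (1) is a structural induction on $\gamma$, clause (2) is an induction along the comparison with the key observation that $V$ never sits inside a $\vartheta$, and clause (3) is the direct unfold-and-compare argument you give.

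The discrepancy is entirely in clause (4). You read the right-hand side of the conclusion as $D_{0,\delta}(\alpha)$ and correctly observe that this is \emph{much} harder than monotonicity, since there is no assumed relation between $\gamma$ and $\alpha$; you then build an elaborate argument around the fixed-point closure of $A=D_{0,\delta}(\alpha)$ under $\delta$. But look at the paper's own proof: the very first line compares the collapse argument against $\omega^{\Xi^{(0)}(1)\#\beta}\#D_{0,\delta}(\alpha)^{\leq 0}_{-1}(\Xi^{(0)}(0))$, i.e.\ against the inside of a collapse of $\beta$, not $\alpha$. This, together with the analogous clause in the polymorphic Key Lemma (where the bound is $D_\delta(\beta)$) and in the original Key Lemma (bound $D_{n,\gamma}(\beta)$), makes it clear that the ``$\alpha$'' on the right is a typo for ``$\beta$''. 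The paper's proof of (4) is then the same shape as (3): use (1)/(2) to get $\gamma[V\mapsto^0 D_{0,\delta}(\alpha)^{\leq 0}_{-1}]<\beta$, hence the collapse arguments compare; then check that $K^{<0}$ of the left-hand argument lands in $K^{<0}\gamma\cup\{D_{0,\delta}(\alpha)^{\leq 0}_{-1}(w)\}$, every member of which is bounded by the appropriate $D_{m,\delta}(\beta)$.

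Your closure argument for the literal statement does not go through as written. Closure of $A=\vartheta Q$ under $\zeta\mapsto\delta[w\mapsto^0\zeta]$ only bounds terms of that specific shape; it does not by itself force $\vartheta P<A$, because $\vartheta P$ is not an instance of $\delta(\cdot)$, and the $\vartheta<\vartheta$ clause still requires either $P<Q$ (which you cannot get without $\gamma<\alpha$) or a member of $K^{<0}Q$ dominating $\vartheta P$ after instantiation (which your closure fact does not supply for a term whose main body is $\gamma^\dagger$). So the ``is meant to force'' step is exactly where the argument breaks, and it breaks because the target you are aiming at is stronger than what the paper intends or proves.
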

\begin{proof}
  \begin{enumerate}
  \item By induction on $\gamma$.
  \item By induction on the comparison between $\alpha$ and $\beta$. The case which would otherwise be complicated is avoided since $V$ does not appear inside any $\vartheta$.
  \item Since $\alpha<\beta$ we have $\omega^{\Xi^{(0)}(1)\#\alpha}\#\delta(\Xi^{(0)}(0))<\omega^{\Xi^{(0)}(1)\#\beta}\#\delta(\Xi^{(0)}(0))$. If $\eta\in K^{<0}(\omega^{\Xi^{(0)}(1)\#\alpha}\#\delta(\Xi^{(0)}(0)))$ then $\eta\in K^{<0}(\alpha)\cup \{\delta\}$ and is therefore bounded by $D_\gamma(\beta)$, and furthermore by $D_{m,\gamma}(\beta)$ for the suitable $m$.
    
  \item We have $\omega^{\Xi^{(0)}(1)\#\gamma[V\mapsto^0 D_{0,\delta}(\alpha)^{\leq 0}_{-1}]}\# D_{0,\delta}(\alpha)^{\leq 0}_{-1}(\Xi^{(0)}(0))<\omega^{\Xi^{(0)}(1)\#\beta}\# D_{0,\delta}(\alpha)^{\leq 0}_{-1}(\Xi^{(0)}(0))$ (using (1) to show that $\gamma[V\mapsto^0 D_{0,\delta}(\alpha)^{\leq 0}_{-1}]<\beta$). Suppose $\eta\in K^{<0}(\omega^{\Xi^{(0)}(1)\#\gamma[V\mapsto^0 D_{0,\delta}(\alpha)^{\leq 0}_{-1}]}\# D_{0,\delta}(\alpha)^{\leq 0}_{-1}(\Xi^{(0)}(0)))$. Then $\eta\in K^{<0}\gamma\cup\{D_{0,\delta}(\alpha)^{\leq 0}_{-1}(w)\}$ and is therefore bounded by the suitable $D_{m,\gamma}(\beta)$.    
  \end{enumerate}
\end{proof}

\section{Towards $\Pi^1_2\mhyphen\mathrm{CA}_0$}
\subsection{Ordinal Terms}

Just as we can place a cardinal $\Omega_2$ above $\Omega_1$, we can stack up polymorphic symbols any way we like, as well as mixing these polymorphic symbols with conventional ones---we could have an ordinary cardinal $\Omega_1$, then a polymorphic symbol $\Omega$ placed above $\Omega_1$, then a polymorphic $\Xi$ placed above that, and then another cardinal $\Omega_2$ above all of those.

In this section, we wish to consider a system with cardinals in the form
\[\Omega_1<\Omega_2<\cdots<\Omega_n<\cdots<\Xi<\Omega_{\Omega+1}<\cdots<\Omega_{\Omega+n}<\cdots.\]
(We write the second batch of $\Omega$ cardinals with indices $\Omega+n$, rather than $\omega+n$, to emphasize that they are much larger than the cardinal usually written $\Omega_{\omega+n}$; this is perhaps a bit misleading because they are \emph{also} much larger than the cardinal written $\Omega_{\Omega_1+1}$.)

The lower $\Omega$ cardinals $\Omega_n$ will behave conventionally. However we will do something more complicated with the upper $\Omega$ cardinals. The polymorphic $\Xi$ term introduces a sequence of cardinalities $\Xi^{(0)}>\Xi^{(-1)}>\cdots$, and we choose to have the $\Omega_{\Omega+n}$ cardinals get ``caught'' in this loop---that is, we will have $\Omega_{\Omega+n}^{(0)}>\cdots>\Omega_{\Omega+1}^{(0)}>\Xi^{(0)}>\cdots>\Omega_{\Omega+n}^{(-1)}>\cdots$.

Having specified that, the construction of the terms is mostly a matter of assembling ideas from above in one place.
\begin{definition}
  We define the ordinal terms $\mathrm{OT}_{\omega+\Xi+\omega}$ together with a distinguished subset $\mathbb{H}$ by:
  \begin{itemize}
  \item if $\{\alpha_0,\ldots,\alpha_{n-1}\}$ is a finite multi-set of elements of $\mathrm{OT}_{\omega+\Xi+\omega}$ in $\mathbb{H}$ and $n\neq 1$ then $\#\{\alpha_0,\ldots,\alpha_{n-1}\}$ is in $\mathrm{OT}_{\omega+\Xi+\omega}$,
  \item if $\alpha$ is in $\mathrm{OT}_{\omega+\Xi+\omega}$ then $\omega^\alpha$ is in $\mathbb{H}$,
  \item for each $n>0$, $\Omega_n$ is in $\mathbb{H}$,
  \item for each $J\leq 0$ and each $n>0$, $\Omega^{(J)}_{\Omega+n}$ is in $\mathbb{H}$,
  \item for each $J\leq 0$ and each $\alpha$ in $\mathrm{OT}_{\omega+\Xi+\omega}$, $\Xi^{(J)}(\alpha)$ is in $\mathbb{H}$,
  \item for any $\alpha$ in $\mathrm{OT}_{\omega+\Xi+\omega}$, $\vartheta_{\Omega_n}\alpha$ is in $\mathbb{H}$,
  \item for any $\alpha$ in $\mathrm{OT}_{\omega+\Xi+\omega}$, $\vartheta_{\Omega_{\Omega+n}}\alpha$ is in $\mathbb{H}$,
  \item for any $\alpha$ in $\mathrm{OT}_{\omega+\Xi+\omega}$, $\vartheta_\Xi\alpha$ is in $\mathbb{H}$.
  \item there are an infinite set of variables $V$ and, for each $v\in V$ and non-positive integer $J$, $v^{(J)}$ is in $\mathrm{OT}_{\omega+\Xi+\omega}$.
  \end{itemize}

  We define $\mathbb{SC}$ to be all ordinal terms in $\mathbb{H}$ \emph{not} of the form $\omega^\alpha$ and $\mathbb{C}$ to be ordinal terms of the form $\Omega_n$, $\Omega^{(J)}_{\Omega+n}$, $\Xi^{(J)}(\alpha)$, or $v^{(J)}$.
\end{definition}

Our formal cardinalities are a bit more complicated now. They look like:
\begin{align*}
  -\infty<1<2<\cdots<n<&\cdots<(J-1,0)\\
                <(J-1,1)<&\cdots<(J-1,n)<\cdots<(J-1,\infty)<(J,0)<\\&\cdots<(J,n)<\cdots<(0,\infty).\end{align*}
Here $-\infty$ is the smallest cardinality, for countable ordinals. $n$ is the cardinality of $\Omega_n$. $(J,0)$ is the cardinality of $\Xi^{(J)}$ and $(J,n)$ is the cardinality of $\Omega^{(J)}_{\Omega+n}$. Nothing sits at the cardinality $(J,\infty)$---rather, it denotes the state in which $\Xi^{(J+1)}$ is bound, but no $\Omega_n^{(J)}$ has been bound yet.

\begin{definition}
  We define the \emph{formal cardinalities} to the
  \[\{-\infty\}\cup\{n\mid n\in\mathbb{N}, n>0\}\cup\{(J,m)\mid J\leq 0,m\in\mathbb{N}\cup\{\infty\}\}\]
  with the ordering described above.

  We call a formal cardinality \emph{large} if it has the form $(J,m)$.
\end{definition}

We need to define a few operations on formal cardinalities we will use regularly.
\begin{definition}
  We define $(J',n)-J=(J'-J,\infty)$, $\min\{(J,m),n\}=(J,\min\{m,n\})$, and $J'-(J,n)=J'-J$.
\end{definition}

\begin{definition}
Let $c$ be a large formal cardinality and $J$ an integer. We define $\alpha^{< c}_{\pm J}$ by:
  \begin{itemize}
  \item $\#\{\alpha_i\}^{< c}_{\pm J}=\#\{(\alpha_i) ^{< c}_{\pm J}\}$,
  \item $(\omega^\alpha) ^{<c}_{\pm J}=\omega^{\alpha^{<c}_{\pm J}}$,
  \item $(\Omega_n)^{<c}_{\pm J}=\Omega_n$,
  \item $(\Omega^{(J')}_n)^{<c}_{\pm J}=
    \left\{\begin{array}{ll}
             \Omega^{(J'+J)}_n&\text{if }(J',n)<c\\
             \Omega^{(J')}_n&\text{if }(J',n)\geq c\\
           \end{array}\right.$,
  \item $(\Xi^{(J')}(\alpha))^{< c}_{\pm J}=    \left\{\begin{array}{ll}
             \Xi^{(J'+J)}(\alpha)&\text{if }(J',0)<c\\
             \Xi^{(J')}(\alpha^{< c-J'}_{\pm J})&\text{if }(J',0)\geq c\\
           \end{array}\right.$,
  \item $(\vartheta_{\Omega_n}\alpha)^{<c}_{\pm J}=\vartheta_{\Omega_n}\alpha$,
  \item $(\vartheta_{\Omega_{\Omega+n}}\alpha)^{<c}_{\pm J}=\vartheta_{\Omega_{\Omega+n}}(\alpha^{<\min\{c,n\}}_{\pm J})$,
  \item $(\vartheta_\Xi\alpha)^{<c}_{\pm J}=\vartheta_\Xi(\alpha^{<c-1}_{\pm J})$,
  \item $(v^{(J')})^{<c}_{\pm J}=    \left\{\begin{array}{ll}
             v^{(J'+J)}&\text{if }(J',0)<c\\
             v^{(J')}&\text{if }(J',0)\geq c\\
           \end{array}\right.$.
  \end{itemize}
\end{definition}

Note that, for this purpose, $v^{(J)}$ sits in the same cardinality as $\Xi^{(J)}$.

Next we can define how substituting for the variables $v^{(J)}$ works.
\begin{definition}
  Let $J\leq 0$. We define $\alpha[v\mapsto^{J}\beta]$ by induction on $\alpha$:
  \begin{itemize}
  \item $\#\{\alpha_i\}[v\mapsto^{J}\beta]=\#\{\alpha_i[v\mapsto^{J}\beta]\}$,
  \item $\omega^\alpha[v\mapsto^{J}\beta]=\omega^{\alpha[v\mapsto^{J}\beta]}$,
  \item $\Omega_n[v\mapsto^J\beta]=\Omega_n$,
  \item $\Omega^{(J')}_{\Omega+n}[v\mapsto^J\beta]=\Omega^{(J')}_n$,
  \item $\Xi^{(J')}(\alpha)[v\mapsto^{J}\beta]=
    \left\{\begin{array}{ll}
             \Xi^{(J')}(\alpha[v\mapsto^{J-J'}\beta])&\text{if }J\leq J'\\
             \Xi^{(J')}(\alpha)&\text{if }J>J'\\
           \end{array}\right.$
       \item $(\vartheta_{\Omega_n}\alpha)[v\mapsto^{J}\beta]=\vartheta_{\Omega_n}\alpha$,
       \item $(\vartheta_{\Omega_{\Omega+n}}\alpha) [v\mapsto^{J}\beta]=\vartheta_{\Omega_{\Omega+n}}\alpha[v\mapsto^{J}\beta]$,
       \item $(\vartheta_\Xi\alpha)[v\mapsto^{J}\beta]=\vartheta_\Xi(\alpha[v\mapsto^{J-1}]\beta)$,
  \item $w^{(J')}[v\mapsto^{J}\beta]=\left\{\begin{array}{ll}
                                              \beta^{< (0,\infty)}_{+J}&\text{if }v=w\\
                                              w&\text{otherwise}
                                            \end{array}\right.$.
  \end{itemize}

\end{definition}

As before, we only want to substitute into $v$ when it is at the right level at the moment of substitution.

\begin{definition}
  We define when $v$ is $J$-substitutable in $\alpha$ by:
  \begin{itemize}
  \item $v$ is $J$-substitutable in $\#\{\alpha_i\}$ if $v$ is $J$-substitutable in each $\alpha_i$,
  \item $v$ is $J$-substitutable in $\omega^\alpha$ if $v$ is $J$-substitutable in $\alpha$,
  \item $v$ is always $J$-substitutable in $\Omega_n$,
  \item $v$ is always $J$-substitutable in $\Omega^{(J')}_{\Omega+n}$,
  \item $v$ is $J$-substitutable in $\Xi^{(J')}(\alpha)$ if either $v$ does not appear in $\alpha$ or $J\leq J'$ and $v$ is $J-J'$-substitutable in $\alpha$,
  \item $v$ is $J$-substitutable in $(\vartheta_{\Omega_n}\alpha)$,
  \item $v$ is $J$-substitutable in $(\vartheta_{\Omega_{\Omega+n}}\alpha)$ if $v$ is $J$-substitutable in $\alpha$,
  \item $v$ is $J$-substitutable in $(\vartheta_\Xi\alpha)$ if $v$ is $J-1$-substitutable in $\alpha$,
  \item $v$ is $c$-substitutable in $w^{(J')}$ if $v\neq w$ or $J=J'$.
  \end{itemize}
\end{definition}

We next collect up the cardinalities present in an ordinal term. %

\begin{definition}
  Let $c$ be a large formal cardinality. We define $\mathbb{FC}^{<c}(\alpha)$ inductively by:
  \begin{itemize}
  \item $\mathbb{FC}^{<c}(\#\{\alpha_i\})=\bigcup_i\mathbb{FC}^{<c}(\alpha_i)$,
  \item $\mathbb{FC}^{<c}(\omega^\alpha)=\mathbb{FC}^{<c}(\alpha)$,
  \item $\mathbb{FC}^{<c}(\Omega_n)=\{n\}$,
  \item $\mathbb{FC}^{<c}(\Omega^{(J')}_{\Omega+n})=\left\{\begin{array}{ll}
                                                      \{(J'-c,n)\}&\text{if }(J',n)<c\\
                                                      \emptyset&\text{if }(J',n)\geq c
                                                    \end{array}\right.$
  \item $\mathbb{FC}^{< c}(\Xi^{(J')}(\alpha))=\left\{\begin{array}{ll}
                                                        \{(J'-c,0)\}\cup \mathbb{FC}^{<c-J'}(\alpha)&\text{if }(J',0)<c\\
                                                        \mathbb{FC}^{<c-J'}(\alpha)&\text{if }(J',0)\geq c
                                                    \end{array}\right.$,
  \item $\mathbb{FC}^{<c}(\vartheta_{\Omega_n}\alpha)=\mathbb{FC}^{<c}\alpha$,
  \item $\mathbb{FC}^{<c}(\vartheta_{\Omega_{\Omega+n}}\alpha)=\mathbb{FC}^{<\min\{c,n\}}\alpha$,
  \item $\mathbb{FC}^{<c}(\vartheta_\Xi\alpha)=\mathbb{FC}^{<c-1}\alpha$,
  \item $\mathbb{FC}^{<c}(v^{(J')})=\left\{\begin{array}{ll}
                                             \{(J'-c,0)\}&\text{if }(J',0)<c\\
                                             \emptyset&\text{if }(J',0)\geq c
                                           \end{array}\right.$.
  \end{itemize}

  We define $\overline{\mathbb{FC}}^{<c}(\alpha)=\sup\mathbb{FC}^{<c}(\alpha)$ (where $\sup\emptyset=-\infty$).
\end{definition}

\begin{definition}
We now define the critical subterms for each type of cardinal.
  
  \begin{itemize}
  \item $K_n\#\{\alpha_i\}=\bigcup_i K_n\alpha_i$,
  \item $K_n\omega^\alpha=K_n\alpha$,
  \item $K_n\Omega_m=\left\{\begin{array}{ll}
                              \{\Omega_m\}&\text{if }m<n\\
                              \emptyset&\text{if }m\geq n
                              \end{array}\right.$,
  \item $K_n\Omega^{(J')}_m=\emptyset$,
  \item $K_n\Xi^{(J')}(\alpha)=\emptyset$,
  \item $K_n\vartheta_{\Omega_m}\alpha=\left\{\begin{array}{ll}
                                                \{\vartheta_{\Omega_m}\alpha\}&\text{if }m\leq n\\
                                                K_n\alpha&\text{if }m>n\\
                                              \end{array}\right.$,
  \item $K_n\vartheta_{\Omega_{\Omega+m}}\alpha=K_n\alpha$,
  \item $K_n\vartheta_\Xi\alpha=K_n\alpha$,
  \item $K^{<c}_{\Omega+n} v^{(J')}=\emptyset$.
 \end{itemize}

  \begin{itemize}
  \item $K^{<c}_{\Omega+n}\#\{\alpha_i\}=\bigcup_i K^{<c}_{\Omega+n}\alpha_i$,
  \item $K^{<c}_{\Omega+n}\omega^\alpha=K^{<c}_{\Omega+n}\alpha$,
  \item $K^{<c}_{\Omega+n}\Omega_m=\{\Omega_m\}$,
  \item $K^{<c}_{\Omega+n}\Omega^{(J')}_m=\left\{\begin{array}{ll}
                                          \{\Omega^{(J'-c)}_m\}&\text{if }(J',m)<c\\
                                          \emptyset&\text{if }(J',m)\geq c
                                        \end{array}\right.$,
  \item $K^{<c}_{\Omega+n}\Xi^{(J')}(\alpha)=\left\{\begin{array}{ll}
                                          \{\Xi^{(J'-c)}(\alpha)\}&\text{if }(J',0)<c\\
                                          K^{<c-J'}_{\Omega+n}\alpha&\text{if }(J',0)\geq c
                                        \end{array}\right.$,
  \item $K^{<c}_{\Omega+n}\vartheta_{\Omega_m}\alpha=\vartheta_m\alpha$,
  \item $K^{<c}_{\Omega+n}\vartheta_{\Omega_{\Omega+m}}\alpha=\left\{\begin{array}{ll}
                                            (\vartheta_{\Omega_{\Omega+m}}\alpha)^{< (0,\infty)}_{+0-c}&\text{if }\overline{\mathbb{FC}}^{<(0,\infty)}(\vartheta_{\Omega_{\Omega+m}}\alpha)< \min\{c,n\}\\
                                            K^{<\min\{c,n\}}_{\Omega+n}\alpha&\text{if }\overline{\mathbb{FC}}^{< (0,\infty)}(\vartheta_{\Omega_{\Omega+m}}\alpha)\geq \min\{c,n\}
                                          \end{array}\right.$,
  \item $K^{<c}_{\Omega+n}\vartheta_\Xi\alpha=\left\{\begin{array}{ll}
                                            (\vartheta_\Xi\alpha)^{< (0,\infty)}_{+0-c}&\text{if }\overline{\mathbb{FC}}^{<(0,\infty)}(\vartheta_\Xi\alpha)<\min\{c,n\}\\
                                            K^{<c-1}_\Omega\alpha&\text{if }\overline{\mathbb{FC}}^{<(0,\infty)}(\vartheta_\Xi\alpha)\geq\min\{c,n\}
                                          \end{array}\right.$,
  \item $K^{<c}_{\Omega+n} v^{(J')}=\left\{\begin{array}{ll}\{v^{(J'-c)}\}&\text{if }(J',0)< \min\{c,n\}\\
                                                                                        \emptyset&\text{if }(J',0)>\min\{c,n\}\end{array}\right.$.
 \end{itemize}

  \begin{itemize}
  \item $K^{<c}_\Xi\#\{\alpha_i\}=\bigcup_i K^{<c}_\Xi\alpha_i$,
  \item $K^{<c}_\Xi\omega^\alpha=K^{<c}_\Xi\alpha$,
  \item $K^{<c}_\Xi\Omega_n=\{\Omega_n\}$,
  \item $K^{<c}_\Xi\Omega_m^{(J')}=\left\{\begin{array}{ll}
                                          \{\Omega^{(J'-c)}_m\}&\text{if }(J',m)<c\\
                                          \emptyset&\text{if }(J',m)\geq c
                                        \end{array}\right.$,
  \item $K^{<c}_\Xi\Xi^{(J')}(\alpha)=\left\{\begin{array}{ll}
                                          \{(\Xi^{J'}(\alpha))^{<(0,\infty)}_{+0-c}\}&\text{if }(J',0)<c\\
                                          K^{< c-J'}_\Xi\alpha&\text{if }(J',0)\geq c
                                        \end{array}\right.$,
  \item $K^{<c}_\Xi\vartheta_{\Omega_n}\alpha=\{\vartheta_{\Omega_n}\alpha\}$,
  \item $K^{<c}_\Xi\vartheta_{\Omega_{\Omega+n}}\alpha=
    \left\{\begin{array}{ll}
             (\vartheta_{\Omega_{\Omega+n}}\alpha)^{<(0,\infty)}_{+0-(c-1)}&\text{if }\overline{\mathbb{FC}}^{<(0,\infty)}(\vartheta_{\Omega_{\Omega+n}}\alpha)<c-1\\
             K_\Xi^{<\min\{c,n\}}\alpha&\text{if }\overline{\mathbb{FC}}^{<(0,\infty)}(\vartheta_{\Omega_{\Omega+n}}\alpha)\geq c-1
           \end{array}\right.$,
  \item $K^{<c}_\Xi\vartheta_\Xi\alpha=
    \left\{\begin{array}{ll}
             \left(\overline{(\vartheta_\Xi\alpha)^{<(0,\infty)}_{+1}}\right)^{<(0,\infty)}_{+0-c}&\text{if }\overline{\mathbb{FC}}^{< (0,\infty)}(\vartheta_\Xi\alpha)<c\\
             K_\Xi^{<c-1}\alpha&\text{if }\overline{\mathbb{FC}}^{<(0,\infty)}(\vartheta_\Xi\alpha)\geq c
           \end{array}\right.$,
       \item $K^{<c}_\Xi v^{(J')}=\left\{\begin{array}{ll}\{v^{(J'-c)}\}&\text{if }(J',0)< c\\
                                                         \emptyset&\text{if }(J',0)\geq c\end{array}\right.$.
  \end{itemize}
\end{definition}

\begin{definition}
  We define the ordering $\alpha<\beta$:
  \begin{itemize}
  \item $\#\{\alpha_i\}<\#\{\beta_j\}$ if there is some $\beta_{j_0}\in\{\beta_j\}\setminus\{\alpha_i\}$ such that, for all $\alpha_i\in\{\alpha_i\}\setminus\{\beta_j\}$, $\alpha_i<\beta_{j_0}$,
\item if $\beta\in\mathbb{H}$ then:
  \begin{itemize}
  \item $\beta<_\kappa\#\{\alpha_i\}$ if there is some $i$ with $\beta\leq\alpha_i$,
  \item $\#\{\alpha_i\}<\beta$ if, for all $i$, $\alpha_i<\beta$,
  \end{itemize}
\item $\omega^\alpha<\omega^\beta$ if $\alpha<\beta$,
\item if $\beta\in\mathbb{SC}$ then:
  \begin{itemize}
  \item $\beta<\omega^\alpha$ if $\beta<\alpha$,
  \item $\omega^\alpha<\beta$ if $\alpha\leq\beta$,
  \end{itemize}
\item $\Omega_m<\Omega^{(J)}_{\Omega+n}$,
\item $\Omega_m<\Xi^{(J)}(\alpha)$,
\item $\Omega^{(J)}_{\Omega+m}<\Omega^{(J')}_{\Omega+n}$ if $J<J'$ or $J=J'$ and $m<n$,
\item $\Xi^{(J)}(\alpha)<\Omega^{(J')}$ if $J\leq J'$,
\item $\Omega^{(J)}_{\Omega+n}<\Xi^{(J')}(\alpha)$ if $J<J'$,
\item $\Xi^{(J)}(\alpha)<\Xi^{(J')}(\beta)$ if $J<J'$ or $J=J'$ and $\alpha<\beta$,
\item if $\alpha\in\mathbb{C}$ then $\alpha<\vartheta_{\Omega_n}\beta$ if there is a $\gamma\in K_n\beta$ with $\alpha\leq\gamma$,
\item if $\alpha\in\mathbb{C}$ then $\alpha<\vartheta_{\Omega_{\Omega+n}}\beta$ if there is a $\gamma\in K^{<(0,n)}_{\Omega+n}\beta$ with $\alpha\leq\gamma$,
\item if $\alpha\in\mathbb{C}$ then $\alpha<\vartheta_{\Xi}\beta$ if there is a $\gamma\in K^{<(0,0)}_{\Xi}\beta$ with $\alpha\leq\gamma$,
\item to compare $\vartheta_c\alpha$ and $\vartheta_{c'}\beta$ we first define the critical sets $C,D$:
  \begin{itemize}
  \item if $c\neq \Xi$ then $C=K^{<(0,c)}_c\alpha$,
  \item if $c'\neq \Xi$ then $D=K^{<(0,c)}_{c'}\beta$,
  \item if $c=\Xi$ then $C$ is the set of $\gamma[v\mapsto\beta']$ such that $\gamma\in K^{<(0,0)}_\Xi\alpha$ and $\beta'$ is a parameter of $\beta$,
  \item if $c'=\Xi$ then $D$ is the set of $\gamma[v\mapsto\alpha']$ such that $\gamma\in K^{<(0,0)}_\Xi\beta$ and $\alpha'$ is a parameter of $\alpha$.
  \end{itemize}
  we then set $\vartheta_c\alpha<\vartheta_{c'}\beta$ if either
  \begin{itemize}
  \item there is a $\delta\in D$ with $\vartheta_c\alpha\leq\delta$, 
  \item there is no $\delta\in C$ $\vartheta_c\beta\leq\delta$ and either $c=c'$ and $\alpha<\beta$, or $c<c'$.
  \end{itemize}
\end{itemize}
\end{definition}

We can now introduce variables at any cardinality---ordinal variables for any $\Omega_n$ or $\Omega_{\Omega+n}$, or function variables for $\Xi$---and establish corresponding versions of the Key Lemma as above.

\printbibliography
\end{document}